\documentclass{article}
\usepackage{amsmath}%
\usepackage{amsthm}%
\usepackage{amssymb}%

\usepackage{comment}

\newtheorem{theorem}{Theorem}[section]

\newtheorem{definition}[theorem]{Definition}
\newtheorem{proposition}[theorem]{Proposition}
\newtheorem{lemma}[theorem]{Lemma}
\newtheorem{remark}[theorem]{\it Remark}
\newtheorem{corollary}[theorem]{Corollary}
\newtheorem{example}[theorem]{\it Example}

\newtheorem{assumption}[theorem]{Assumption}

\renewcommand{\indent}{\hspace*{5mm}}

\newcommand{\cK}{{\cal K}}
\newcommand{\cL}{{\cal L}}
\newcommand{\cM}{{\cal M}}
\newcommand{\cN}{{\cal N}}

\newcommand{\cR}{{\cal R}}
\newcommand{\cS}{{\cal S}}

\begin{document}

\title{The law of the iterated logarithm in game-theoretic probability with quadratic and
stronger hedges}
\author{Kenshi Miyabe\thanks{Research Institute for Mathematical Sciences,  Kyoto University} \ and 
Akimichi Takemura\thanks{Graduate School of Information Science and Technology,  University of Tokyo}}

\date{\today}
\maketitle


\begin{abstract}
We prove both the validity and the sharpness of 
the law of the iterated logarithm in game-theoretic probability with quadratic and
stronger hedges.
\end{abstract}

\section{Background and the main result}

Assume that $\{X_n\}$ is a sequence of independent random variables
with $\mathbf{E}X_n=0$, $\mathbf{E}X_n^2<\infty$ and $n\ge1$.
Put $A_n=\sum_{i=1}^n\mathbf{E}X_i^2$ and $S_n=\sum_{i=1}^n X_i$.
The Kolmogorov law of the iterated logarithm (LIL) \cite{Kol29} says that
\begin{align}\label{eq:LIL}
\limsup_n \frac{S_n}{\sqrt{2A_n\ln\ln A_n}}=1\mbox{ a.s.}
\end{align}
if $A_n\to\infty$ for $n\to\infty$
and if there exists a sequence $\{c_n\}$ such that
\[|X_n|\le c_n=o(\sqrt{A_n/\ln\ln A_n})\mbox{ a.s.}\]
Our main result (Theorem \ref{th:main}) implies, as a corollary, that
the following is a sufficient condition for the LIL \eqref{eq:LIL}:
\begin{align}\label{eq:new-suff-cond}
A_n\to\infty\mbox{ and }\sum_n\frac{\mathbf{E}h(X_n)}{h(\sqrt{A_n/\ln\ln A_n})}<\infty
\end{align}
where $h$ satisfies Assumption \ref{assumption:1}.

We review some related results.
The restriction $|X_n|\le c_n$ of the Kolmogorov LIL is needed in a sense.
Marcinkiewicz and Zygmund \cite{MarZyg37} constructed
a sequence of independent random variables for which
$A_n\to\infty$ and $|X_n|=O(\sqrt{A_n/\ln\ln A_n})$
and which does not obey the LIL.
A number of other sufficient conditions for the LIL \eqref{eq:LIL}
were given in the literature
such as \cite{Ego90,Pet03}.
For instance, Egorov \cite{Ego84} showed that the following is a sufficient condition:
\[\frac{\sum_{i=1}^n X_i^2}{A_n}\to1\mbox{ a.s. }(n\to\infty)\mbox{ and }\]
\[\sum_{i=1}^n\mathbf{E}X_i^2 I(|X_i|>\frac{\epsilon A_n}{\ln\ln A_n})=o(A_n)\]
for any $\epsilon>0$.
Our result gives a new sufficient condition \eqref{eq:new-suff-cond}
for the LIL \eqref{eq:LIL}.
In the case of independent, identically distributed (i.i.d.)
random variables,
Hartman and Wintner \cite{HartWin41} proved that
existence of a second moment suffices for the LIL
and Strassen \cite{Str66} proved conversely that
existence of a second moment is necessary. 

The topic of this paper is the LIL in game-theoretic probability,
which was studied in Shafer and Vovk \cite{ShaVov01} under two protocols.
The first protocol ``unbounded forecasting'' only contains a quadratic hedge.

\begin{quote}
{\sc Unbounded Forecasting}\\
\textbf{Players}: Forecaster, Skeptic, Reality\\
\textbf{Protocol}:\\
\indent $\cK_0:=1$.\\
\indent FOR $n=1,2,\ldots$:\\
\indent\indent Forecaster announces $m_n\in\mathbb{R}$ and $v_n\ge0$.\\
\indent\indent Skeptic announces $M_n\in\mathbb{R}$ and $V_n\ge0$.\\
\indent\indent Reality announces $x_n\in\mathbb{R}$.\\
\indent\indent $\cK_n:=\cK_{n-1}+M_n(x_n-m_n)+V_n((x_n-m_n)^2-v_n)$.\\
\textbf{Collateral Duties}:
Skeptic must keep $\cK_n$ non-negative.
Reality must keep $\cK_n$ from tending to infinity.
\end{quote}

When Forecaster announces the range of $x_n$ at each round $n$, 
the game is called ``predictably unbounded forecasting''. 

\begin{quote}
{\sc Predictably Unbounded Forecasting}\\
\textbf{Players}: Forecaster, Skeptic, Reality\\
\textbf{Protocol}:\\
\indent $\cK_0:=1$.\\
\indent FOR $n=1,2,\ldots$:\\
\indent\indent Forecaster announces $m_n\in\mathbb{R}$, $c_n\ge0$, and $v_n\ge0$.\\
\indent\indent Skeptic announces $M_n\in\mathbb{R}$ and $V_n\in\mathbb{R}$.\\
\indent\indent Reality announces $x_n\in\mathbb{R}$ such that $|x_n-m_n|\le c_n$.\\
\indent\indent $\cK_n:=\cK_{n-1}+M_n(x_n-m_n)+V_n((x_n-m_n)^2-v_n)$.\\
\textbf{Collateral Duties}:
Skeptic must keep $\cK_n$ non-negative.
Reality must keep $\cK_n$ from tending to infinity.
\end{quote}

Let $A_n=\sum_{i=1}^n v_i$.
Shafer and Vovk \cite{ShaVov01} showed the following two theorems.

\begin{theorem}[Theorem 5.1 in \cite{ShaVov01}]\label{th:predict-lil}
In the predictably unbounded forecasting protocol,
Skeptic can force
\[\left(A_n\to\infty\ \&\ c_n=o\left(\sqrt{\frac{A_n}{\ln\ln A_n}}\right)\right)
\Longrightarrow\limsup_{n\to\infty}\frac{\sum_{i=1}^n(x_i-m_i)}{\sqrt{2A_n\ln\ln A_n}}=1.\]
\end{theorem}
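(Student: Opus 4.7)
The statement is an ``implication that Skeptic can force,'' so it suffices to exhibit a non-negative capital process that tends to infinity along every play in which $A_n\to\infty$ and $c_n=o(\sqrt{A_n/\ln\ln A_n})$ hold but the conclusion fails. The plan is to build two families of such strategies, one forcing $\limsup\le1$ (validity) and one forcing $\limsup\ge1$ (sharpness), and then to average them with positive summable weights to obtain a single strategy that proves the full equality.

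For the \emph{validity} half, the plan is to imitate, inside the protocol, the classical exponential martingale $\exp(\lambda S_n-\lambda^2 A_n/2)$. Writing $\xi_i:=x_i-m_i$ and choosing the moves $M_n:=\lambda\cK_{n-1}$, $V_n:=(\lambda^2/2)\cK_{n-1}$, Skeptic's capital evolves as
\[
 \cK_n=\cK_{n-1}\bigl(1+\lambda\xi_n+\tfrac{\lambda^2}{2}(\xi_n^2-v_n)\bigr).
\]
A Taylor inequality of the form $1+y+y^2/2\ge\exp(y-|y|^3)$, valid once $|y|=\lambda|\xi_n|\le\lambda c_n$ is sufficiently small, yields
\[
 \cK_n\ \ge\ \exp\!\Bigl(\lambda S_n-\tfrac{\lambda^2}{2}A_n-\lambda^3\sum_{i=1}^n|\xi_i|^3\Bigr).
\]
The cubic remainder is at most $\lambda^3A_n\cdot\max_{i\le n}c_i$, and the hypothesis $c_n=o(\sqrt{A_n/\ln\ln A_n})$ is precisely what makes this term negligible compared with $\lambda^2 A_n$ when $\lambda\asymp\sqrt{\ln\ln A_n/A_n}$. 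Resetting at times when $A_n$ crosses a geometric grid and picking $\lambda=(1+\delta)\sqrt{2\ln\ln A_n/A_n}$ on each block forces Skeptic's capital to infinity on any play violating $\limsup\le 1+\delta'$; mixing over a countable $\delta\downarrow 0$ with positive weights delivers the tight validity bound.

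For the \emph{sharpness} half, one chooses an increasing subsequence $n_0<n_1<\cdots$ along which $A_{n_k}$ grows geometrically, so that $\ln\ln A_{n_{k+1}}$ and $\ln\ln A_{n_k}$ are comparable. The goal is to force, for every $\epsilon>0$, the event
\[
 S_{n_{k+1}}-S_{n_k}\ \ge\ (1-\epsilon)\sqrt{2\,(A_{n_{k+1}}-A_{n_k})\,\ln\ln A_{n_{k+1}}}
\]
to occur for infinitely many $k$, which implies $\limsup\ge 1$. On each block Skeptic runs the validity-side exponential strategy with a parameter slightly below the critical one, yielding a Cram\'er-type lower bound on the blockwise gain. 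A game-theoretic Borel--Cantelli argument across disjoint blocks then asserts that if the displayed event fails for all large $k$, compounding the blockwise multiplicative gains sends Skeptic's capital to infinity; since the blocks are disjoint, non-negativity is preserved throughout.

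The main obstacle is the uniform-in-$n$ control of the cubic Taylor remainder: the assumption $c_n=o(\sqrt{A_n/\ln\ln A_n})$ is used in a tight way to guarantee $\lambda_n^3 c_n A_n=o(\lambda_n^2 A_n)=o(\ln\ln A_n)$, so any weakening of this hypothesis would propagate into a loss in the $\limsup$. The sharpness direction is additionally delicate because the block endpoints $\{n_k\}$ must be chosen so that (i) $A_{n_{k+1}}/A_{n_k}$ is bounded and bounded away from one, (ii) $\max_{n_k<i\le n_{k+1}}c_i/\sqrt{A_{n_{k+1}}/\ln\ln A_{n_{k+1}}}$ is small on each block, and (iii) the blockwise Cram\'er lower bound is applied at a $\lambda$ that simultaneously absorbs the cubic error and preserves the $(1-\epsilon)$ factor in the target inequality.
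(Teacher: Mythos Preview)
The paper does not itself prove this theorem; it is quoted as Theorem~5.1 of Shafer--Vovk \cite{ShaVov01} and used only as background. The closest the paper comes to a proof is its adaptation of Shafer--Vovk's argument to the UFQSH protocol (Propositions~\ref{th:upper-bound} and~\ref{th:lower-bound}, with the core of the lower bound in Lemmas~\ref{lem:L-upperbound}--\ref{lem:N}), and that adaptation is a faithful guide to the original. Your validity sketch matches that approach.

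Your sharpness sketch, however, has a genuine gap. You write that on each block ``Skeptic runs the validity-side exponential strategy with a parameter slightly below the critical one, yielding a Cram\'er-type lower bound on the blockwise gain.'' But a single approximate exponential martingale $\exp(\kappa S_n-\kappa^2 A_n/2)$ is \emph{large} when $S_n$ is large; it gives Skeptic no gain on the event you actually need, namely $S_{n_{k+1}}-S_{n_k}<(1-\epsilon)\sqrt{2(A_{n_{k+1}}-A_{n_k})\ln\ln A_{n_{k+1}}}$. The measure-theoretic Cram\'er device is a change of measure followed by a reverse change; there is no one-line game-theoretic analogue using a single exponential. The construction actually used (Shafer--Vovk, reproduced here as Lemma~\ref{lem:N}) builds approximate upper and lower exponential martingales $\cL^{\le,\kappa}$, $\cL^{\ge,\kappa}$ at three nearby rates $\kappa_1<\kappa_2<\kappa_3$ and forms
\[
\cM_n=3\cL_n^{\le,\kappa_2}-\cL_n^{\ge,\kappa_1}-\cL_n^{\ge,\kappa_3},\qquad \cN_n=1+\frac{1-\cM_n}{\ln C}.
\]
The point of the three-term combination is that (i) when $S_n\le(1-\epsilon)\sqrt{2C\ln\ln C}$ at the block endpoint, the $\cL^{\ge,\kappa_1}$ term dominates and forces $\cM_n\le 0$, hence $\cN_n\ge 1+1/\ln C$; while (ii) for all intermediate $n$ the $\cL^{\ge,\kappa_3}$ term controls $3\cL^{\le,\kappa_2}$ from above, keeping $\cN_n>0$. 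Compounding over blocks $C=D^k$ then gives the divergent product $\prod_k(1+1/(k\ln D))$. Your proposal is missing exactly this two-sided sandwiching idea, and without it there is no non-negative blockwise martingale that gains on $\{S\ \text{small}\}$.
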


\begin{theorem}[Theorem 5.2 in \cite{ShaVov01}]\label{th:unbound-restrict}
In the unbounded forecasting protocol,
Skeptic can force
\[\left(A_n\to\infty\ \&\ |x_n-m_n|=o\left(\sqrt{\frac{A_n}{\ln\ln A_n}}\right)\right)
\Longrightarrow\limsup_{n\to\infty}\frac{\sum_{i=1}^n(x_i-m_i)}{\sqrt{2A_n\ln\ln A_n}}\le1.\]
\end{theorem}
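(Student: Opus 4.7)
The plan is to reduce to a countable family of strict inequalities and certify each via an exponential capital process. For each $\epsilon>0$ it suffices to build a non-negative capital process that diverges on every path where the premise holds and $\limsup S_n/\sqrt{2A_n\ln\ln A_n}>1+\epsilon$, writing $S_n=\sum_{i\le n}(x_i-m_i)$. Mixing such strategies over $\epsilon=1/j$ with weights $2^{-j}$ then forces $\limsup\le 1$ on every path satisfying the premise. Fix such an $\epsilon$, choose $\alpha>1$ slightly larger than $1$, and introduce the stage times $\tau_k=\min\{n:A_n\ge\alpha^k\}$, which are finite whenever $A_n\to\infty$.

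Over the block $(\tau_{k-1},\tau_k]$ Skeptic plays an exponential-type strategy at scale $\lambda_k=(1+\epsilon/2)\sqrt{2\ln\ln\alpha^k/\alpha^k}$, essentially with $M_n=\lambda_k\cK_{n-1}$ and $V_n\approx\tfrac12\lambda_k^2\cK_{n-1}$, adjusted to respect the non-negativity constraint $\lambda_k^2 v_n\le 1$ obtained by forcing the discriminant of the quadratic-in-$(x_n-m_n)$ capital factor to be non-positive. On admissible rounds,
\[
\cK_n \ge \cK_{n-1}\bigl(1+\lambda_k(x_n-m_n)+\tfrac12\lambda_k^2((x_n-m_n)^2-v_n)\bigr),
\]
and the premise $|x_n-m_n|=o(\sqrt{A_n/\ln\ln A_n})$ makes $\lambda_k|x_n-m_n|\to 0$ uniformly along the block, so each factor dominates $\exp(\lambda_k(x_n-m_n)-\tfrac12\lambda_k^2 v_n - R_n)$ with cubic-order remainders satisfying $\sum R_n = o(1)$ over the block.

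A game-theoretic Ville-type maximal inequality then shows that if $S_n$ exceeds $(1+\epsilon)\sqrt{2\alpha^k\ln\ln\alpha^k}$ anywhere inside block $k$, the block's capital reaches $(\ln k)^{1+\delta}$ for some $\delta=\delta(\epsilon,\alpha)>0$ obtained from the margin between the playing scale $\lambda_k$ and the threshold scale, then sending $\alpha\downarrow 1$. Assembling the stage-$k$ strategies with summable prefactors $p_k=k^{-1-\delta/2}$ (the rest of the capital sitting idle) yields a non-negative overall process that diverges as soon as the threshold is crossed in infinitely many stages, which under $A_n\to\infty$ must happen whenever $\limsup>1+\epsilon$.

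The principal obstacle is the twin role played by $V_n$: obtaining the sub-Gaussian bound inside a block calls for $V_n$ of order $\lambda_k^2 \cK_{n-1}$, but keeping $\cK_n\ge 0$ for \emph{every} $x_n\in\mathbb{R}$ — Reality may violate the premise on any single round — forces the quadratic coefficient to satisfy $\lambda_k^2 v_n\le 1$. Rounds where Forecaster sets $v_n$ too large must therefore be skipped or hedged via an auxiliary bounded strategy, and one must check that their total contribution to the exponent is negligible along paths satisfying the hypothesis; this accounting, together with the cubic-remainder estimate above, is the technical core of the argument. The same one-sided non-negativity constraint is what restricts the conclusion to $\limsup\le 1$ rather than the two-sided equality of Theorem~\ref{th:predict-lil}, where Forecaster's announced bound $c_n$ gives Skeptic advance control over the range of $x_n-m_n$.
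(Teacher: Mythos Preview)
The paper does not contain a proof of this statement: it is quoted verbatim as Theorem~5.2 of Shafer and Vovk \cite{ShaVov01} and then used as a black box in the proof of Proposition~\ref{th:upper-bound} (the paper reduces its own upper bound to this theorem via Lemma~\ref{lem:x-bound}). So there is nothing in the present paper to compare your proposal against.

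That said, your outline is the standard exponential-supermartingale argument, essentially the one in \cite{ShaVov01}: geometric blocking of the variance scale $A_n$, a multiplicative strategy at level $\lambda_k\asymp\sqrt{(\ln\ln\alpha^k)/\alpha^k}$ within each block, Taylor control of the factor $1+\lambda_k(x_n-m_n)+\tfrac12\lambda_k^2((x_n-m_n)^2-v_n)$ when $\lambda_k|x_n-m_n|$ is small, and a Borel--Cantelli/Ville combination over $k$. The issues you single out---enforcing non-negativity for \emph{arbitrary} $x_n$ via the discriminant constraint, skipping or damping rounds with $v_n$ too large, and bounding the accumulated cubic remainder using the hypothesis $|x_n-m_n|=o(\sqrt{A_n/\ln\ln A_n})$---are exactly the technical points that need to be addressed, and your identification of why only the upper bound survives in the unbounded protocol is correct. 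As a sketch this is sound; to be a proof it would need the explicit bookkeeping on those three points, which you have flagged but not carried out.
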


In the unbounded forecasting protocol,
it seems difficult to give a natural sufficient condition to force 
the lower bound of the LIL (cf.\ Proposition 5.1 of \cite{ShaVov01}).
Then we would like to find a non-predictable protocol under which 
a natural sufficient condition for the LIL exists.
A clue can be found in Takazawa \cite{Takazawa, takazawa-aism}
where he has showed a weaker upper bound with double hedges.
Another clue is the original proof \cite{HartWin41} of the Hartman-Wintner LIL
that uses a delicate truncation (see also Petrov \cite{Pet95}).
Thus we consider a game with stronger hedges
large enough to do the truncation.




\begin{quote}
{\sc The Unbounded Forecasting Game With Quadratic and Stronger Hedges}
(UFQSH)\\
\textbf{Parameter}: $h:\mathbb{R}\to\mathbb{R}$\\
\textbf{Players}: Forecaster, Skeptic, Reality\\
\textbf{Protocol}:\\
\indent $\cK_0:=1$.\\
\indent FOR $n=1,2,\ldots$:\\
\indent\indent Forecaster announces $m_n\in\mathbb{R}$, $v_n\ge0$ and $w_n\ge0$.\\
\indent\indent Skeptic announces $M_n\in\mathbb{R}$, $V_n\in\mathbb{R}$
and $W_n\in\mathbb{R}$.\\
\indent\indent Reality announces $x_n\in\mathbb{R}$.\\
\indent\indent $\cK_n:=\cK_{n-1}+M_n(x_n-m_n)+V_n((x_n-m_n)^2-v_n)$\\
\indent\indent \hspace{30mm}$+W_n(h(x_n-m_n)-w_n)$.\\
\textbf{Collateral Duties}:
Skeptic must keep $\cK_n$ non-negative.
Reality must keep $\cK_n$ from tending to infinity.
Forecaster must keep the game coherent.
\end{quote}

For simplicity we only consider an extra hedge $h$ with the following conditions.

\begin{assumption}
\label{assumption:1} \  
\begin{enumerate}
\item $h$ is an even function.
\item $h\in C^2$ and $h(0)=h'(0)=h''(0)=0$.
\item $h''(x)$ is strictly increasing, unbounded and concave (upward convex)
for $x\ge0$.
\end{enumerate}
\end{assumption}

Let 
\[
S_n=\sum_{i=1}^nx_i, \quad b_n=\sqrt{\frac{A_n}{\ln\ln A_n}}.
\]

We state our main result. 
\begin{theorem}\label{th:main}
In UFQSH with $h$ satisfying Assumption \ref{assumption:1},
Skeptic can force
\begin{align*}
\left(A_n\to\infty\mbox{ and }
\sum_n\frac{w_n}{h(b_n)}<\infty\right)
\Rightarrow
\limsup_{n\to\infty}\frac{S_n-\sum_{i=1}^n m_i}{\sqrt{2A_n\ln\ln A_n}}=1.
\end{align*}
\end{theorem}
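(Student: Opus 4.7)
The plan is to prove the two inclusions $\limsup \le 1$ and $\limsup \ge 1$ separately, and combine the resulting Skeptic strategies by averaging their initial capitals, which forces the intersection of the two events and yields the equality. For the upper bound the idea is to use the extra hedge $W_n$ to force the truncation condition $|x_n - m_n| = o(b_n)$, after which Theorem \ref{th:unbound-restrict} applies directly (a strategy in the unbounded forecasting protocol remains legal in UFQSH by setting $W_n \equiv 0$). Assumption \ref{assumption:1} makes $h$ non-negative and super-quadratic. The standard game-theoretic ``summable hedge'' argument, applied to the non-negative quantity $h(x_n - m_n)$ with coherent price $w_n$ and Skeptic's wager $W_n \propto 1/h(b_n)$, then forces $\sum_n h(x_n - m_n)/h(b_n) < \infty$ from the hypothesis $\sum_n w_n/h(b_n) < \infty$. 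In particular $h(x_n - m_n)/h(b_n) \to 0$, which together with $b_n \to \infty$ and the super-quadratic growth of $h$ yields $|x_n - m_n|/b_n \to 0$.

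For the lower bound I would adapt the exponential super-martingale argument underlying Shafer--Vovk's Theorem \ref{th:predict-lil}, with the hedge $h$ taking on the role of Forecaster's predictable range $c_n$. Fix $\epsilon > 0$ and partition time into blocks $(n_{k-1}, n_k]$ chosen so that $A_{n_k} \approx \gamma^k$ for some $\gamma > 1$. On each block I would maintain an exponential super-martingale roughly of the form
\[
\mathcal{E}_n = \exp\left(\lambda_k\left(S_n - \sum_{i \le n} m_i\right) - \tfrac{\lambda_k^2}{2} A_n\right),
\]
with $\lambda_k \approx \sqrt{2(1-\epsilon)\ln\ln A_{n_k}/A_{n_k}}$. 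Classically this is a super-martingale only when $\lambda_k(x_n - m_n)$ is small; here the residual exponential error on large deviations is paid for by an $h$-hedge, using a Taylor-type bound of the form $|e^{\lambda_k x} - 1 - \lambda_k x - \tfrac{\lambda_k^2}{2} x^2| \le C \cdot h(x)/h(b_{n_k})$ when $x$ is outside the small-$\lambda_k x$ region. The aggregate cost of these corrections is bounded by a constant times $\sum_n w_n/h(b_n) < \infty$, keeping the super-martingale well-defined and non-negative. A second-Borel--Cantelli / block-independence argument then forces $S_{n_k} - \sum_{i \le n_k} m_i \ge (1-\epsilon)\sqrt{2 A_{n_k}\ln\ln A_{n_k}}$ infinitely often; diagonalizing over $\epsilon = 1/j$ gives the full lower bound.

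The main obstacle lies in the lower bound, and specifically in the Taylor-type estimate linking the exponential error $e^{\lambda x} - 1 - \lambda x - \lambda^2 x^2/2$ to $h(x)/h(b_n)$. In the Kolmogorov and Shafer--Vovk (predictable) settings this error is controlled by a hard deterministic bound $|x_n - m_n| \le c_n = o(b_n)$; here we must synthesize such control from a pay-as-you-go hedge, so the estimate's validity depends crucially on the super-quadratic, unbounded, concave-second-derivative form of $h$ codified in Assumption \ref{assumption:1}. Verifying this estimate, choosing the block endpoints $n_k$ and the wagers $W_n$ so that the modified exponential super-martingale remains non-negative and genuinely explodes on the ``failure'' event, is where most of the technical calculation lies.
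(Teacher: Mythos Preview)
Your upper-bound plan matches the paper exactly: use the hedge $W_n\propto 1/h(b_n)$ to force $\sum_n h(x_n-m_n)/h(b_n)<\infty$, deduce $|x_n-m_n|=o(b_n)$, then invoke Theorem~\ref{th:unbound-restrict} (with $W_n\equiv 0$ in that sub-strategy). This is precisely Lemma~\ref{lem:x-bound} and Proposition~\ref{th:upper-bound}.

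The lower-bound sketch, however, has a real gap. Your proposed block super-martingale
\[
\mathcal{E}_n=\exp\!\Big(\lambda_k\big(S_n-\textstyle\sum_i m_i\big)-\tfrac{\lambda_k^2}{2}A_n\Big)
\]
is \emph{large when $S_n$ is large}. To force $\limsup\ge 1$, Skeptic needs a non-negative capital process that becomes large precisely when $S_n$ \emph{stays small} on every block --- the opposite behaviour. A single exponential martingale cannot do this, and ``second Borel--Cantelli / block-independence'' does not carry over to game-theoretic probability without an explicit construction: there is no independence to invoke, only strategies. The paper (following Shafer--Vovk, \S5.3) resolves this by building \emph{three} exponential martingales $\cL^{\le,\kappa_2}$, $\cL^{\ge,\kappa_1}$, $\cL^{\ge,\kappa_3}$ at nearby rates, setting $\cM_n=3\cL^{\le,\kappa_2}_n-\cL^{\ge,\kappa_1}_n-\cL^{\ge,\kappa_3}_n$, and playing $\cN_n=1+(1-\cM_n)/\ln C$. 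The point is that on the failure event $S_n\le(1-\epsilon)\sqrt{2C\ln\ln C}$ one has $\cM_n\le 0$, so $\cN_n\ge 1+1/\ln C$; the two flanking martingales $\cL^{\ge,\kappa_1},\cL^{\ge,\kappa_3}$ keep $\cN_n$ positive when $S_n$ is either very small or very large. The $h$-hedge enters only in $\cL^{\le,\kappa_2}$, via the multiplicative factor $1+\kappa x_i+\kappa^2 x_i^2/2-h(x_i)/h(\kappa^{-1})$, and the key inequality is not your Taylor remainder bound but rather the one-sided estimate that this factor is $\le e^{\kappa x_i}$ for \emph{all} $x_i$ (Proposition~\ref{pro:h}(iii) is what makes this work).

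There is a second subtlety your sketch misses entirely. The natural stopping rule for a block includes the truncation event ``$|x_n|$ large'', but this depends on Reality's move and is therefore revealed to Skeptic only \emph{after} he has already bet. The paper handles this (end of Lemma~\ref{lem:N}) by using Proposition~\ref{pro:h}(vi) to show that the one extra step cannot inflate $\cL^{\le,\kappa_2}$ by more than a factor~$3$ nor shrink $\cL^{\ge,\kappa_3}$ by more than a factor~$3$, so positivity of $\cN_n$ survives with a slightly larger~$C$. Without this, your strategy could go negative at the round where a large $|x_n|$ first appears.
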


This theorem is a consequence of Proposition \ref{th:upper-bound}
(upper bound, validity) and Proposition \ref{th:lower-bound} (lower bound, sharpness) below.
This theorem has the following corollary.

\begin{corollary}\label{cor:HW-S}
Let $h$ be an extra hedge satisfying Assumption \ref{assumption:1}
and
\begin{align}\label{eq:cond-h}
\sum_n\frac{1}{h(\sqrt{n/\ln\ln n})}<\infty.
\end{align}
In UFQSH with this $h$ 
and $m_n\equiv m$, $v_n\equiv v$ and $w_n\equiv w$,
the following are equivalent for $m'\in\mathbb{R}$ and $v'\ge0$.
\begin{enumerate}
\item $m'=m$ and $v'=v$.
\item Skeptic can force
\begin{align}\label{eq:HW-LIL}
\limsup_{n\to\infty}\frac{S_n-m'n}{\sqrt{2n\ln\ln n}}=\sqrt{v'}.
\end{align}
\item Reality can comply with \eqref{eq:HW-LIL}.
\end{enumerate}
\end{corollary}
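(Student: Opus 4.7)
My plan is to prove the two forward implications (i)$\Rightarrow$(ii) and (i)$\Rightarrow$(iii) directly---the first from Theorem~\ref{th:main}, the second from the classical Hartman--Wintner LIL---and then to close the loop from (ii) and (iii) back to (i) by combining the respective witnesses with the ``opposite'' witness for the true parameters $(m,v)$.

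For (i)$\Rightarrow$(ii) I would specialize Theorem~\ref{th:main} to the constant announcements $m_n\equiv m$, $v_n\equiv v$, $w_n\equiv w$. When $v>0$ we have $A_n=nv\to\infty$ and $b_n\sim\sqrt{v}\,\sqrt{n/\ln\ln n}$, so it only remains to upgrade \eqref{eq:cond-h} to $\sum_n 1/h(b_n)<\infty$. This reduces to showing $h(cx)\ge\kappa(c)\,h(x)$ uniformly in $x\ge 0$ for each fixed $c>0$, a short exercise using convexity of $h$ together with the concavity and monotonicity of $h''$ (e.g.\ combining $h(x)\le h''(x)x^2/2$ with $h(cx)\ge h''(cx/2)(cx)^2/8$ and $h''(cx/2)\ge (c/2)h''(x)$). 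The edge case $v=0$ is handled separately: Skeptic plays $V_n$ large and non-negative, which forces $x_n=m$ for all but finitely many $n$ and makes both sides of the LIL vanish.

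For (i)$\Rightarrow$(iii) I would have Reality sample $x_n$ i.i.d.\ from a probability measure $\mu$ on $\mathbb{R}$ with $\int y\,d\mu=m$, $\int(y-m)^2\,d\mu=v$ and $\int h(y-m)\,d\mu=w$; such a $\mu$ exists because Forecaster's triple is coherent. Under this i.i.d.\ strategy each of Skeptic's three bets has zero conditional mean, so $\{\cK_n\}$ is a non-negative martingale and therefore converges almost surely to a finite limit. The Hartman--Wintner LIL simultaneously gives $\limsup_n(S_n-mn)/\sqrt{2n\ln\ln n}=\sqrt{v}$ a.s., so the event that both conditions hold has full probability and provides Reality's complying sequence.

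For (ii)$\Rightarrow$(i) I would apply (i)$\Rightarrow$(iii) with $m'=m$, $v'=v$ to obtain Reality's complying strategy $\sigma^{*}$ realizing $E(m,v):\ \limsup(S_n-mn)/\sqrt{2n\ln\ln n}=\sqrt{v}$. Pitting $\sigma^{*}$ against the Skeptic strategy $\tau^{*}$ witnessing (ii) produces a single sequence on which $\cK_n$ stays bounded, so the forcing succeeds and $E(m',v')$ holds alongside $E(m,v)$. If $m\ne m'$ then the drift $(m-m')n/\sqrt{2n\ln\ln n}\to\pm\infty$ forces the limsup in $E(m',v')$ to be infinite, a contradiction; hence $m=m'$, and comparing the two limsups then gives $v=v'$. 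The implication (iii)$\Rightarrow$(i) is symmetric, pairing Reality's complying strategy from (iii) with the Skeptic strategy from (i)$\Rightarrow$(ii) applied to $(m,v)$. I expect the main technical friction to be the summability comparison in the first step---Assumption~\ref{assumption:1} pins down $h$ only qualitatively, so the bound $h(cx)\ge\kappa(c)\,h(x)$ needs a small but careful estimate; everything else rides on Theorem~\ref{th:main}, a classical theorem, and elementary limit arithmetic.
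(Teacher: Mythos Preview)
Your cycle (i)$\Rightarrow$(ii), (ii)$\Rightarrow$(i), (iii)$\Rightarrow$(i) matches the paper (and the scaling estimate $h(cx)\ge\kappa(c)h(x)$ you need for (i)$\Rightarrow$(ii) is exactly Proposition~\ref{pro:h}(iii)). The difference---and the gap---is in how you reach (iii).

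The paper does \emph{not} prove (i)$\Rightarrow$(iii) directly; it proves (ii)$\Rightarrow$(iii) by quoting a general game-theoretic theorem of Miyabe and Takemura: in this protocol, whenever Skeptic can force $E$, Reality can comply with $E$. That theorem constructs an \emph{adaptive} Reality strategy, and this adaptivity is essential. Your i.i.d.\ sampling argument produces a fixed, non-adaptive sequence $(a_n)$, and such a sequence cannot comply in the sense of Definition~\ref{def:comply}. The definition quantifies over \emph{all} Skeptic strategies after Reality's strategy is fixed, so Skeptic may tailor his betting function to the particular sequence $(a_n)$. Concretely, if Reality commits to $(a_n)$ then Skeptic can set $M_n=W_n=0$ and $V_n=\cK_{n-1}/v$ on exactly those rounds $n$ with $(a_n-m)^2>v$, and $V_n=0$ otherwise; this keeps $\cK_n=\cK_{n-1}(x_n-m)^2/v\ge 0$ for every $x_n$, and against the intended play $x_n=a_n$ gives $\cK_n=\prod_{i\le n:\,(a_i-m)^2>v}(a_i-m)^2/v\to\infty$ for $\mu^{\infty}$--almost every sample (whenever $\mu$ is not supported on $\{m\pm\sqrt v\}$). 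Your almost-sure martingale convergence handles only a single Skeptic strategy at a time; intersecting over the uncountably many strategies that can be tailored to each realization is exactly what fails.

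Note that your (ii)$\Rightarrow$(i) does \emph{not} actually need full compliance: against the one fixed forcing strategy $\tau^*$ you only need a path on which $E(m,v)$ holds and $\cK_n^{\tau^*}$ stays bounded, and the i.i.d.\ argument gives that (two almost-sure events). So you get (i)$\Leftrightarrow$(ii) and (iii)$\Rightarrow$(i) cleanly. But the direction (i)$\Rightarrow$(iii) remains open in your scheme, and to close it you need either the Miyabe--Takemura theorem or an independent construction of an adaptive Reality strategy.
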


The definition of ``comply'' is given in Definition \ref{def:comply}.

\begin{remark}
The equation \eqref{eq:HW-LIL} can be replaced with
\[\liminf_{n\to\infty}\frac{S_n-m'n}{\sqrt{2n\ln\ln n}}=-\sqrt{v'}.\]
\end{remark}

Examples for $h$ in this case are 
$h(x)=|x|^\alpha$, $2<\alpha\le 3$,
and $h(x)=(x+1)^2\ln^2(x+1)- x^2$.
See Example \ref{ex:power} and Example \ref{ex:example-hedge} below.

Our results have the following significance.
A game-theoretic version of Kolmogorov's LIL was established
by Shafer and Vovk \cite{ShaVov01},
in which a game-theoretic version of Hartman-Wintner's LIL was questioned.
As we stated, Takazawa \cite{Takazawa, takazawa-aism} also obtained
some related results.
Our main result gives a sufficient condition for
game-theoretic Kolmogorov's LIL
with an extra hedge slightly stronger than the quadratic one.
The corollary has a similar form as Hartman-Wintner's LIL
and Strassen's converse
although stronger hedges are assumed in our case.

\section{Facts and proofs}

In this section we give a proof of our main theorem and its corollary.
For readability our proof is divided into several sections.
We also prove some facts of independent interest.

\subsection{Consequences of the assumptions on the extra hedge}

From now on we assume $m_n \equiv 0$ without loss of generality
until Section \ref{subsec:proof-cor}.

\begin{proposition}\label{pro:h}
Under Assumption \ref{assumption:1}, we have
\begin{enumerate}
\item $\lim_{x\to 0}\frac{h'(x)}{x}=0$ and $\lim_{x\to0}\frac{h(x)}{x^2}=0$.
\item $\frac{h'(x)}{x}$ is strictly increasing and unbounded for $x\ge0$.
\item For $0\le c\le 1$ and for $x\ge0$ we have
\[c^3h(x)\le h(cx)\le c^2h(x).\]
For $c\ge 1$ and for $x\ge0$
\[c^2h(x)\le h(cx)\le c^3h(x).\]
\item 
$x^2=o(h(x))$.
\item $h(x)=O(x^3)$.
\item For any $b>0$, $\max_{y\ge 0} (1+y + y^2/2  - h(by)/h(b)) < 2$.
\end{enumerate}
\end{proposition}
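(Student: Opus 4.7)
The plan is to prove the six items in order, relying throughout on the integral representations $h'(u) = \int_0^u h''(s)\,ds$ and $h(u) = \int_0^u (u-s) h''(s)\,ds$. The core is part~(iii); the remaining items follow from it with only light additional work.

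Parts (i) and (ii) reduce to $h \in C^2$ with $h'(0) = h''(0) = 0$ together with the monotonicity and unboundedness of $h''$. For (i), L'H\^opital applied once gives both limits. For (ii), $(h'(x)/x)' = (x h''(x) - h'(x))/x^2$ is positive since $h''$ strictly increasing forces $h'(x) = \int_0^x h''(s)\,ds < x h''(x)$; unboundedness follows from $h'(x)/x \geq h''(x/2)/2 \to \infty$.

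The heart of the argument is the double inequality
\[2 h(u) \leq u h'(u) \leq 3 h(u), \qquad u \geq 0,\]
which is equivalent to $c \mapsto h(cx)/c^2$ being non-decreasing and $c \mapsto h(cx)/c^3$ being non-increasing in $c > 0$; this yields (iii) for $0 \leq c \leq 1$, and the case $c \geq 1$ follows by applying the result with $c' = 1/c$ and $x' = cx$. Using the integral form of $h$, the two inequalities become
\[\int_0^u (2s - u) h''(s)\,ds \geq 0 \quad \text{and} \quad \int_0^u (2u - 3s) h''(s)\,ds \geq 0.\]
The first uses only monotonicity: the substitution $s = u/2 \pm r$ recasts the integrand as $2r[h''(u/2+r) - h''(u/2-r)] \geq 0$. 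The second is where concavity of $h''$ enters: since $h''$ is concave with $h''(0) = 0$, $\psi(s) := h''(s)/s$ is non-increasing on $(0,\infty)$, and substituting $s = ut$ turns the integral into $u^3 \int_0^1 t(2-3t)\, \psi(ut)\,dt$. The weight $p(t) = t(2-3t)$ satisfies $\int_0^1 p(t)\,dt = 0$ and changes sign at $t = 2/3$ from positive to negative; writing $\int_0^1 p(t) \psi(ut)\,dt = \int_0^1 p(t)[\psi(ut) - \psi(2u/3)]\,dt$ exhibits a non-negative integrand on each subinterval.

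Parts (iv)--(vi) are consequences of (iii). For (iv), combine $h(x) \geq x h'(x)/3$ with $h'(x) \geq (x/2) h''(x/2)$ to obtain $h(x)/x^2 \geq h''(x/2)/6 \to \infty$. For (v), (iii) asserts $h(x)/x^3$ is non-increasing on $(0,\infty)$, so $h(x) \leq h(1) x^3$ for $x \geq 1$. For (vi), (iii) gives $h(by)/h(b) \geq y^3$ on $[0,1]$ and $h(by)/h(b) \geq y^2$ on $[1,\infty)$, so $1 + y + y^2/2 - h(by)/h(b)$ is bounded above by $1 + y + y^2/2 - y^3$ on $[0,1]$ (maximum about $1.61$) and by $1 + y - y^2/2$ on $[1,\infty)$ (maximum $3/2$), both strictly less than $2$; the $y^2$ lower bound also drives the expression to $-\infty$, so the supremum is attained. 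The main obstacle is the upper estimate $u h'(u) \leq 3 h(u)$, which fails without the concavity of $h''$ (consider $h(x) = x^4$), so the sign-cancellation argument keyed on $\int_0^1 p(t)\,dt = 0$ is genuinely where all three conditions on $h''$ must act together.
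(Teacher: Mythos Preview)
Your proof is correct. The main divergence from the paper is in part~(iii). The paper argues directly at the level of $h''$ and $h'$: concavity of $h''$ with $h''(0)=0$ gives $h''(cx)\ge c\,h''(x)$ for $0\le c\le 1$, and two integrations yield $h(cx)\ge c^3 h(x)$; monotonicity of $h''$ makes $h'$ convex, so $h'(cx)\le c\,h'(x)$, and one integration yields $h(cx)\le c^2 h(x)$. Your route instead packages (iii) as the single differential inequality $2h(u)\le u\,h'(u)\le 3h(u)$, equivalent to monotonicity of $h(cx)/c^2$ and $h(cx)/c^3$ in $c$, and proves each half by a sign-cancellation argument on $\int_0^u(2s-u)h''(s)\,ds$ and $\int_0^u(2u-3s)h''(s)\,ds$.

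Both approaches are sound. The paper's is shorter and uses concavity and convexity in their most elementary form, at the cost of handling the two bounds by separate ad~hoc tricks. Yours is a bit heavier (the $\psi(s)=h''(s)/s$ step and the weighted-integral argument), but it has the advantage of making the role of the exponents $2$ and $3$ transparent: they are exactly the bounds on the elasticity $u\,h'(u)/h(u)$ forced by Assumption~\ref{assumption:1}, which also explains immediately why (v) is sharp at $h(x)=|x|^3$ and why the upper inequality fails for $h(x)=x^4$. The remaining parts (i), (ii), (iv)--(vi) are handled essentially the same way in both proofs, with only cosmetic differences (L'H\^opital versus an $\epsilon$--$\delta$ bound, and so on).
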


\begin{proof}
(i)
Since $h''(0)=0$ and $h''$ is continuous, for each $\epsilon>0$,
there exists $\delta>0$ such that
\[h''(x)\le \epsilon\mbox{ for }0\le x\le\delta.\]
Then
\[h'(x)=\int_0^x h''(t)dt
\le\int_0^x \epsilon dt
=\epsilon x.\]
Thus $\lim_{x\to0}h'(x)/x=0$.
By a similar way, we can show that $\lim_{x\to0}\frac{h(x)}{x^2}=0$.

\bigskip
\noindent(ii)
The strict monotonicity of $h'(x)/x$ is equivalent to that,
for $y> 0$,
\begin{align*}
\frac{h'(x+y)}{x+y} >  \frac{h'(x)}{x}
\iff&xh'(x+y)-(x+y)h'(x) > 0\\
\iff&x(h'(x+y)-h'(x))>  y h'(x)\\
\iff&x\int_x^{x+y}h''(t)dt> y \int_0^xh''(t)dt.
\end{align*}
The last inequality holds because
\begin{align*}
x\int_x^{x+y}h''(t)dt
>  xy h''(x)
>  y\int_0^x h''(t)dt.
\end{align*}

We prove that $h'(x)/x$ is unbounded.
Since $h''$ is increasing and unbounded,
for any $C>0$, there exists $D>0$ such that
\[h''(x)>C\mbox{ for }x>D.\]
Then
\[h'(x)-h'(D)=\int_D^x h''(t)dt\ge \int_D^x Cdt=C(x-D)\]
for $x\ge D$.
Note that $C$ is arbitrary.


\bigskip
\noindent(iii)
We prove that $h(cx)\ge c^3h(x)$ for $c\le 1$.
By the concavity of $h''$, we have
\[h''(cx)\ge ch''(x).\]
Thus
\[
h'(cx)
=\int_0^{cx}h''(t)dt
=\int_0^x ch''(cs)ds
\ge \int_0^x c^2h''(s)ds
=c^2h'(x).
\]
Hence
\[
h(cx)
=
\int_0^{cx}h'(t)dt
=\int_0^x ch'(cs)ds\\
\ge
\int_0^x c^3h'(s)ds
=c^3h(x).
\]

Next we prove that $h(cx)\le c^2h(x)$ for $c\le 1$.
Since $h''$ is increasing, $h'$ is convex, thus
\[h'(cx)\le ch'(x)+(1-c)h'(0).\]
Then
\begin{align*}
h(cx)=\int_0^{cx}h'(t)dt
=\int_0^x ch'(cs)ds
\le c^2h'(x).
\end{align*}

The case of $c\ge 1$ is obtained from the first case by replacing $c$ and $cx$ by
$1/c$ and $x$, respectively.

\bigskip
\noindent(iv)
%
By the proof of (ii),
for any $C>0$, there exists $D>0$ such that
\[h'(x)\ge C(x-D)+h'(D)\]
for $x>D$.
Then
\begin{align*}
h(x)-h(D)
=&\int_D^x h'(t)dt
\ge\int_D^x(h'(D)+C(t-D))dt\\
=&h'(D)(x-D)+\frac{C(x^2-D)}{2}-CD(x-D).
\end{align*}
Since $C$ is arbitrary, $x^2=o(h(x))$.

\bigskip
\noindent(v)
By the inequality of (iv), for $x\ge1$,
\[\left(\frac{1}{x}\right)^3h(x)\le h\left(\frac{1}{x}\cdot x\right)=h(1).\]
Then $h(x)\le h(1)x^3$ for $x\ge1$.

\bigskip
\noindent(vi) Writing $y=c$ and $b=x$, by (iii)  for any $b>0$ we have
\[
\frac{h(by)}{h(b)} \ge\min(y^2,y^3) = \begin{cases} y^3 & \text{if}\  0 < y \le 1\\
                               y^2 & \text{if}\   y>1.
                             \end{cases}
\]
Hence 
\[
1+y+\frac{y^2}{2} - \frac{h(by)}{h(b)}\le 1 + y + \frac{y^2}{2} - \min(y^2, y^3).
\]
It is easy to  check numerically that the maximum of the right-hand side is less than 2.
\end{proof}

\begin{proposition}
In UFQSH, Skeptic's move should satisfy $W_n\ge0$ for each $n$.
\end{proposition}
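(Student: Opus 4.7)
My plan is to argue by contradiction: I will show that if Skeptic ever announces $W_n<0$ at some round $n$, then Reality can force $\cK_n$ to become strictly negative, violating Skeptic's collateral duty of non-negativity against every Reality strategy.

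Taking $m_n=0$ (as assumed throughout this section), the increment at round $n$ is
\[
\Delta\cK_n = M_n x_n + V_n(x_n^2 - v_n) + W_n(h(x_n) - w_n).
\]
The key analytic input I would invoke is Proposition \ref{pro:h}(iv), which gives $x^2=o(h(x))$. Consequently, as $|x_n|\to\infty$ the term $W_n h(x_n)$ dominates the linear and quadratic terms $M_n x_n + V_n x_n^2$ in absolute value; when $W_n<0$ this drags $\Delta\cK_n$ to $-\infty$ irrespective of the signs of $M_n,V_n$ and the values of $v_n,w_n\ge 0$. So Reality can pick $|x_n|$ large enough that $\Delta\cK_n < -\cK_{n-1}$, which is a legal move in UFQSH since $x_n\in\mathbb{R}$ is unconstrained.

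This yields $\cK_n<0$, contradicting Skeptic's duty to keep $\cK_n\ge 0$ against every Reality move, so Skeptic must always have $W_n\ge 0$. The argument has essentially no obstacle: it reduces to the single fact $x^2=o(h(x))$, which itself comes from $h''$ being unbounded. In fact this is precisely why Assumption \ref{assumption:1} forces $h$ to be super-quadratic — otherwise Skeptic could profit riskily by shorting $h$, and the extra hedge would be incompatible with coherence.
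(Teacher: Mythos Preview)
Your proof is correct and follows essentially the same approach as the paper: assume $W_n<0$, invoke Proposition~\ref{pro:h}(iv) (i.e., $x^2=o(h(x))$) so that the $W_n h(x_n)$ term dominates the linear and quadratic parts, and conclude that Reality can drive $\cK_n$ negative by choosing $|x_n|$ large. The paper states this slightly more tersely by rewriting $\cK_n<0$ as $h(x_n)>\text{(a quadratic in }x_n)$ and citing (iv), but the content is identical.
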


\begin{proof}
Suppose that $W_n<0$ for some $n$.
It suffices to show that Reality can announce $x_n$ such that 
\[\cK_n=\cK_{n-1}+M_n x_n +V_n (x_n^2-v_n)+W_n(h(x_n)-w_n)<0,\]
which is equivalent to
\[h(x_n)>w_n-\frac{1}{W_n}(\cK_{n-1}+M_n x_n +V_n (x_n^2-v_n)).\]
This follows from (iv) of Proposition \ref{pro:h}.
\end{proof}

\subsection{A generalized H\"older's inequality}

Recall that a  game is called coherent if Reality can make
the capital not to increase at any round.
Intuitively the coherence means existence of a probability measure
such that Reality moves as if her move is based on the measure.
If $h(x)=x^k$, then, by H\"older's inequality, we expect that the coherence
implies $v_n^{1/2}\le w_n^{1/k}$ for all $n$.
We give a similar inequality for a general hedge $h$,
which we will use later.

\begin{proposition}\label{prop:vw}
In UFQSH with $h$ satisfying Assumption \ref{assumption:1}, the game is coherent if and only if
$h(\sqrt{v_n})\le w_n$ for all $n$.
\end{proposition}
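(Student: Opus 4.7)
The plan is to reduce both directions to convexity of the map $\phi(u):=h(\sqrt{u})$ on $[0,\infty)$, which is really the generalized H\"older inequality advertised in the section heading. Convexity of $\phi$ on $(0,\infty)$ is equivalent to monotonicity of $\phi'(u)=h'(\sqrt{u})/(2\sqrt{u})$ in $u$, which by the substitution $x=\sqrt{u}$ is equivalent to monotonicity of $h'(x)/x$ in $x$ --- this is Proposition \ref{pro:h}(ii). Probabilistically, convexity gives: for any symmetric law $P$ on $\mathbb{R}$, Jensen's inequality applied to $\phi$ at the point $u=x^2$ yields $h(\sqrt{\mathbb{E}_P[x^2]})\le\mathbb{E}_P[h(x)]$, which is exactly the inequality we want to characterize.

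For the ``if'' direction, assume $h(\sqrt{v_n})\le w_n$. I would have Reality randomize $x_n=\pm\sqrt{v_n}$ with equal probability. This two-point symmetric distribution has mean $0$, second moment $v_n$, and, since $h$ is even, $h$-moment equal to $h(\sqrt{v_n})$. Using $W_n\ge 0$ from the preceding proposition, the expected increment
\[
\mathbb{E}\bigl[M_n x_n+V_n(x_n^2-v_n)+W_n(h(x_n)-w_n)\bigr]=W_n\bigl(h(\sqrt{v_n})-w_n\bigr)\le 0
\]
is non-positive, so at least one of $x_n=\pm\sqrt{v_n}$ yields a non-positive increment, and Reality plays that value. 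The degenerate case $v_n=0$ reduces to $x_n=0$, where the increment is $-W_n w_n\le 0$.

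For the ``only if'' direction, the case $v_n=0$ is trivial since $h(0)=0\le w_n$. For $v_n>0$, set $a=\phi'(v_n)=h'(\sqrt{v_n})/(2\sqrt{v_n})$ and consider the Skeptic move $M_n=0$, $V_n=-a$, $W_n=1$. The tangent-line bound for the convex function $\phi$ at $u=x^2$ gives
\[
h(x)=\phi(x^2)\ge h(\sqrt{v_n})+a(x^2-v_n)\qquad\text{for all }x\in\mathbb{R},
\]
which rearranges to
\[
-a(x^2-v_n)+(h(x)-w_n)\ge h(\sqrt{v_n})-w_n\qquad(x\in\mathbb{R}).
\]
Coherence requires Reality to drive the left-hand side to a non-positive value for some $x$, forcing $h(\sqrt{v_n})-w_n\le 0$.

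The only conceptual step is identifying that the ``right'' convex function to invoke is $\phi(u)=h(\sqrt{u})$ and noticing that its convexity is already encoded in Proposition \ref{pro:h}(ii). Once that is in hand, both directions are one-line arguments built around two dual extremal objects: the symmetric two-point distribution supported on $\pm\sqrt{v_n}$ on Reality's side, and the tangent hyperplane to $\phi$ at $u=v_n$ on Skeptic's side. No separating-hyperplane machinery is needed.
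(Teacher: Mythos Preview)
Your proof is correct and reaches the same conclusion, but by a genuinely different route than the paper.

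The paper carries out an explicit minimax computation on the increment $g(x;M,V,W)=Mx+V(x^2-v_n)+W(h(x)-w_n)$: after reducing to $M=0$, $W>0$, $V<0$ and writing $U=-V/W$, it solves $\sup_{U>0}\min_{x>0}\bigl(h(x)-w_n-U(x^2-v_n)\bigr)$ by calculus, using Proposition~\ref{pro:h}(ii) to guarantee a unique inner minimizer $x(U)$ via $U=h'(x)/(2x)$, and then shows the outer maximum occurs at $x(U^*)^2=v_n$ with value $h(\sqrt{v_n})-w_n$. You instead recognize at the outset that the whole problem is governed by convexity of $\phi(u)=h(\sqrt{u})$, which is exactly Proposition~\ref{pro:h}(ii) rephrased, and then exhibit the two dual extremal objects directly: the symmetric two-point law on $\pm\sqrt{v_n}$ for Reality, and the tangent line to $\phi$ at $u=v_n$ for Skeptic. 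These are precisely the saddle point $(x^*,U^*)=(\sqrt{v_n},\,h'(\sqrt{v_n})/(2\sqrt{v_n}))$ that the paper finds by differentiation, so the two arguments agree on the extremals but arrive there differently. Your version is shorter, makes the ``generalized H\"older'' interpretation in the section heading transparent, and avoids the envelope-theorem bookkeeping. One minor point: your appeal to ``$W_n\ge 0$ from the preceding proposition'' is slightly imprecise, since coherence must be checked against \emph{all} Skeptic moves, not just those respecting the collateral duty; but the proof of that proposition does show that when $W_n<0$ Reality can drive the increment to $-\infty$, so the case is indeed trivial and the paper disposes of it the same way.
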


\begin{proof}
Consider
\[
g(x;M,V,W)=M x + V (x^2 - v_n) + W (h(x)-w_n).
\]
Since the case $v_n=0$ or $w_n=0$ is trivial, we assume
$v_n,w_n>0$.
If $W=0$, then $\min_{x=\pm\sqrt{v_n}}g(x;M,V,W)\le0$.
If $W<0$, then $g(x;M,V,W)<0$ for a sufficiently large $x$.
Thus, we assume $W>0$ in the following.
Then  $g(\pm\infty;M,V,W)=\infty$ and $g(x;M,V,W)$ attains minimum
with respect to $x$ for fixed $M,V,W$.
The game is not coherent if and only if 
\[
\sup_{M,V,W}\min_x g(x;M,V,W) > 0
\]
at some round $n$.
If $V\ge 0$, then putting $x=0$ we have
\[
g(0;M,V,W) =-Vv_n - W w_n < 0,
\]
thus we ignore this case.
Furthermore we can let $M=0$
because 
$V (x^2 - v_n) + W (h(x)-w_n)$ is an even function and for any $x_0 >0$
\[
\min_{x=\pm x_0} g(x;M,V,W)= -|M| x_0 + V (x_0^2 - v_n) + W (h(x_0)-w_n).
\]
Now write
\[
g(x;0,V,W)= W \times \big(h(x)-w_n - U(x^2-v_n)\big)=W f(x;U), 
\]
where $U=-V/W > 0$. The game is not coherent if and only if
\[
\sup_{U>0} \min_{x>0}  f(x;U)>0
\]
for some $n$.
For $x>0$ 
\[
f'(x;U)=h'(x) - 2U x =2x ( \frac{h'(x)}{2x} - U).
\]
Hence for given $U$, the solution  $x=x(U)$ of $f'(x)=0$ 
is uniquely given by 
\begin{equation}
\label{eq:coherence-unique}
U=\frac{h'(x)}{2x}
\end{equation}
and $f$ takes the unique minimum at $x=x(U)$.
Now the right-hand side of \eqref{eq:coherence-unique} is strictly increasing in  $x$. Hence 
$x(U)$ is strictly increasing in $U$.  By the assumption on $h$, 
$x=x(U)$ is differentiable in $U$. Also note
$x(0)=0, x(\infty)=\infty$.
Let  
\[
\tilde f(U)=f(x(U);U)= h(x(U)) - w_n - U (x(U)^2 - v_n).
\]
We now maximize $\tilde f(U)$.
Differentiating $\tilde f(U)$ we have
\begin{align*}
\tilde f'(U)&= h'(x(U)) x'(U) - U \times (2 x(U) x'(U)) - (x(U)^2 - v_n) \\
&= [ h'(x(U)) - 2 U x(U) ] x'(U) - (x(U)^2 - v_n) \\
&= -  (x(U)^2 - v_n) .
\end{align*}
This implies that $\tilde f$ takes the unique maximum at $U=U^*$
satisfying $x(U^*)^2=v_n$.  By substituting $x(U^*)^2=v_n$
we have
\[
\max_{U>0} \min_{x>0} f(x;U) = \tilde f(U^*)=
h(x(U^*)) - w_n - U^* (x(U^*)^2 - v_n)= h(\sqrt{v_n})-w_n.
\]
Hence the game is not coherent if and only if $h(\sqrt{v_n})-w_n>0$ for some $n$.
\end{proof}

\subsection{Examples of the stronger hedge}


We give concrete examples of the stronger hedge
satisfying the conditions in Corollary \ref{cor:HW-S}.

\begin{example}
\label{ex:power}
Let $h(x)=|x|^\alpha$ for $2<\alpha \le 3$.
Then $h$ satisfies Assumption \ref{assumption:1} and
the condition \eqref{eq:cond-h}.
\end{example}


\begin{example}
\label{ex:example-hedge}
More elaborate example is the following hedge:
\begin{equation*}
h(x)= (1+x)^2 \ln^2(1+x)-x^2.
\end{equation*}
Note that $h(x)=x^2 \ln^2 x(1+o(x))$ as $x\rightarrow\infty$ and 
\[
\sum_{n} \frac{1}{h(\sqrt{n/\ln\ln n})} < \infty.
\]
This follows from the fact that for large $C$ the  following integral converges:
\[
\int_{C}^\infty \frac{1}{(x/\ln\ln x) \ln^2(x/\ln\ln x)}dx < \infty.
\]
Differentiating $h(x)$ successively we have
\begin{align*}
h'(x)&=2(1+x)\ln^2(1+x) + 2(1+x)\ln(1+x) - 2x,\\
h''(x)&=2 \ln^2(1+x) + 6 \ln(1+x),\\
h'''(x)&= \frac{4\ln(1+x)}{1+x} + \frac{6}{1+x},\\
h''''(x)&= -\frac{4\ln (1+x)}{(1+x)^2} - \frac{2}{(1+x)^2}.
\end{align*}
Hence $h\in C^2$, $h(0)=h'(0)=h''(0)=0$ and $h''$ is strictly increasing, unbounded and concave.
\end{example}

\subsection{Upper bound (validity)}

We show the upper bound of the LIL under our assumptions.

\begin{proposition}\label{th:upper-bound}
In UFQSH with $h$ satisfying Assumption \ref{assumption:1},
Skeptic can force
\begin{align}\label{eq:upper-bound}
\left(A_n\to\infty\mbox{ and }
\sum_n\frac{w_n}{h(b_n)}<\infty\right)
\Rightarrow
\limsup_{n\to\infty}\frac{S_n}{\sqrt{2A_n\ln\ln A_n}}\le1.
\end{align}
\end{proposition}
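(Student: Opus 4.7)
The plan is to build, for each $\epsilon>0$, a non-negative capital process that diverges whenever the hypothesis holds but $\limsup_n S_n/\sqrt{2A_n\ln\ln A_n}>1+\epsilon$, by adapting the Shafer--Vovk exponential supermartingale construction to exploit the stronger hedge $h$. Averaging such processes over a sequence $\epsilon_j\downarrow 0$ will yield a single Skeptic strategy that forces $\limsup\le 1$ under the hypothesis.

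The heart of the argument is a Bernstein-type inequality. Given $\lambda>0$ and a scale $b>0$ with $\lambda b$ small, I would look for coefficients $V_0=V_0(\lambda)$ close to $\lambda^2/2$ and $W_0=W_0(\lambda,b)$ of order $\lambda^2/h(b)$ such that the Skeptic factor
\[
F(x)=1+\lambda x+V_0(x^2-v)+W_0(h(x)-w)
\]
stays non-negative for all $x\in\mathbb{R}$ and every $(v,w)$ satisfying coherence $h(\sqrt v)\le w$ (Proposition~\ref{prop:vw}), while admitting the lower bound $F(x)\ge\exp(\lambda x-(1+\delta)\tfrac{\lambda^2}{2}v-CW_0 w)$ with $\delta\to 0$ as $\lambda b\to 0$. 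In the Gaussian regime $|x|\le b$ an ordinary Bernstein estimate handles this; outside that regime Proposition~\ref{pro:h}(iii) gives $h(x)/h(b)\ge(x/b)^2$, so the $W_0 h(x)$ term dominates the quadratic, and Proposition~\ref{pro:h}(vi) supplies the non-negativity precisely where the linear term pushes $F$ toward zero.

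Next I would discretize geometrically: fix $\rho>1$ close to $1$, put $N_k=\rho^k$, $\lambda_k=(1+\epsilon)\sqrt{2\ln\ln N_k/N_k}$, and define $\mathcal{E}^{(k)}_n$ by Skeptic's playing $M_n=\lambda_k\mathcal{E}^{(k)}_{n-1}$, $V_n=V_0(\lambda_k)\mathcal{E}^{(k)}_{n-1}$, and $W_n=W_0(\lambda_k,b_{N_k})\mathcal{E}^{(k)}_{n-1}$ at every round. The key lemma yields $\log\mathcal{E}^{(k)}_n\ge\lambda_k S_n-(1+\delta)\tfrac{\lambda_k^2}{2}A_n-CW_0(\lambda_k,b_{N_k})\sum_{i\le n}w_i$. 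On the bad event, for infinitely many $n$ one has $A_n\in[N_k,N_{k+1}]$ for some $k$ with $S_n\ge(1+\tfrac{\epsilon}{2})\sqrt{2A_n\ln\ln A_n}$; choosing $\rho$ close to $1$ ensures the main term $\lambda_k S_n-(1+\delta)\tfrac{\lambda_k^2}{2}A_n\gtrsim(1+\epsilon')\ln\ln N_k\to\infty$. The correction stays bounded because $b_i$ is non-decreasing, so $h(b_i)\le h(b_{N_k})$ for $i\le N_{k+1}$, giving $W_0(\lambda_k,b_{N_k})\sum_{i\le N_{k+1}}w_i\lesssim\lambda_k^2\sum_i w_i/h(b_i)$, which vanishes since $\lambda_k\to 0$ and the sum converges by hypothesis. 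Thus $\mathcal{E}^{(k)}_n\to\infty$ along the subsequence, and $\sum_k\mathcal{E}^{(k)}/k^2$ is a non-negative capital process diverging on the bad event.

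The main technical obstacle is the Bernstein-type inequality itself: one needs $V_0,W_0$ so that $F(x)\ge 0$ holds universally in $x$ and adversarially in $(v,w)$ (subject only to coherence), while still producing a sharp exponential lower bound. Since $h$ grows only polynomially (Proposition~\ref{pro:h}(v)) whereas $e^{\lambda x}$ is exponential, the hedge term cannot pointwise dominate $e^{\lambda x}$ for $x\gg b$; rather, $W_0 h(x)$ only prevents $F$ from becoming negative at extreme $x$, and the quadratic supplies the leading contribution in the typical regime $|x|\le b$. Proposition~\ref{pro:h}(vi) is custom-made for this trade-off, and pinning down the precise dependence of $V_0,W_0$ on $(\lambda,b)$ and of $\delta$ on $\lambda b$ is where the main calculation lies.
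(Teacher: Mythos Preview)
Your approach has a genuine gap at the Bernstein step. You want the factor $F(x)=1+\lambda x+V_0(x^2-v)+W_0(h(x)-w)$ to satisfy $F(x)\ge\exp\bigl(\lambda x-(1+\delta)\tfrac{\lambda^2}{2}v-CW_0 w\bigr)$ and then multiply over rounds to obtain $\log\mathcal{E}^{(k)}_n\ge\lambda_k S_n-\cdots$. But no such pointwise inequality can hold for all $x\in\mathbb{R}$: by Proposition~\ref{pro:h}(v) one has $F(x)=O(|x|^3)$ as $x\to+\infty$, whereas the right-hand side grows like $e^{\lambda x}$. You note this yourself (``the hedge term cannot pointwise dominate $e^{\lambda x}$ for $x\gg b$''), yet your displayed lower bound on $\log\mathcal{E}^{(k)}_n$ is used as though the factorwise inequality were universal. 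Reality is free to play a single very large positive $x_i$, making $F(x_i)\ll e^{\lambda_k x_i}$ and destroying the product lower bound; mere non-negativity of $F$ does not help, and nothing in your strategy penalises such a move enough to make the capital diverge.

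The paper's proof avoids this entirely by a two-step reduction rather than a direct construction. Using only the stronger hedge ($M_n=V_n=0$, $W_n=1/h(\epsilon b_n)$), Skeptic first forces $|x_n|=o(b_n)$ under the hypothesis (Lemma~\ref{lem:x-bound}); once Reality's moves are so bounded, Theorem~\ref{th:unbound-restrict} (the Shafer--Vovk upper bound in the unbounded-forecasting protocol) applies verbatim. Your plan can be repaired by running such a bounding account alongside the exponential one---on paths satisfying the hypothesis either that account diverges or $|x_i|\le b$ eventually, whereupon your Bernstein estimate becomes legitimate for all late rounds---but at that point you have reproduced exactly the paper's decomposition, and the elaborate direct construction becomes redundant.
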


By Theorem \ref{th:unbound-restrict},
it suffices to show the following lemma.

\begin{lemma}\label{lem:x-bound}
In UFQSH with $h$ satisfying Assumption \ref{assumption:1},
Skeptic can force
\begin{align}\label{eq:x-bound}
A_n\to\infty\mbox{ and }\sum_n\frac{w_n}{h(b_n)}<\infty
\Rightarrow |x_n|=o(b_n) .
\end{align}
\end{lemma}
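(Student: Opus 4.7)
The plan is to prove Lemma~\ref{lem:x-bound} by a game-theoretic Borel--Cantelli argument applied, for each integer $k\ge 1$, to the events $E_n^{(k)}:=\{|x_n|>b_n/k\}$. Since $h$ is even and strictly increasing on $[0,\infty)$ (which follows from $h'(0)=0$ together with $h''>0$ on $(0,\infty)$), we have $h(x_n)\ge h(b_n/k)$ on $E_n^{(k)}$. Proposition~\ref{pro:h}(iii) applied with $c=1/k\le 1$ gives $h(b_n/k)\ge k^{-3}h(b_n)$, so under the hypothesis
\[
\sum_n \frac{w_n}{h(b_n/k)}\ \le\ k^3\sum_n \frac{w_n}{h(b_n)}\ <\ \infty.
\]

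For each $k$ I would build a Skeptic strategy $\sigma_k$ whose capital tends to infinity on $\{E_n^{(k)}\text{ i.o.}\}$ whenever the hypothesis holds. To avoid needing to know the value of $\sum_n w_n/h(b_n/k)$ in advance, I mix over an auxiliary resource level $j\ge 1$: let $\sigma_{k,j}$ play $M_n=V_n=0$ and $W_n=(1/j)/h(b_n/k)$ at round $n$ provided the running sum $\sum_{i\le n}w_i/h(b_i/k)\le j$ (and $A_n$ is large enough that $b_n$ is defined), setting $W_n=0$ otherwise. Starting from initial capital $1$, the capital of $\sigma_{k,j}$ is non-negative because the cumulative loss contributed by the $-w_i$ terms is at most $(1/j)\cdot j=1$. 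Under the hypothesis, for every $j$ larger than the (random) value $\sum_n w_n/h(b_n/k)$ the strategy $\sigma_{k,j}$ bets at every round; if in addition $E_n^{(k)}$ occurs infinitely often, then $\sum_i h(x_i)/h(b_i/k)=\infty$ and $\sigma_{k,j}$'s capital diverges. The combined strategy $\sigma_k:=\sum_{j\ge 1}2^{-j}\sigma_{k,j}$ is then a legitimate Skeptic play with initial capital $1$ that inherits the divergence property.

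Finally I take $\sigma:=\sum_{k\ge 1}2^{-k}\sigma_k$. Since $1/h(b_n/k)\le k^3/h(b_n)$ by Proposition~\ref{pro:h}(iii) and $\sum_k 2^{-k}k^3<\infty$, the series defining $\sigma$'s moves converge absolutely at every round with $b_n>0$, and $\sigma$'s capital --- the corresponding convex combination of the capitals of the $\sigma_{k,j}$'s --- is non-negative with initial value $1$. If the hypothesis holds but $|x_n|\neq o(b_n)$, then $E_n^{(k)}$ occurs infinitely often for some $k$, so $\sigma_k$'s capital and hence $\sigma$'s capital diverges, violating Reality's collateral duty. This is precisely what it means for Skeptic to force~\eqref{eq:x-bound}, which together with Theorem~\ref{th:unbound-restrict} yields Proposition~\ref{th:upper-bound}.

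The main obstacle I anticipate is organizational rather than conceptual: verifying rigorously that the doubly-mixed $\sigma$ is a legal Skeptic play with non-negative capital at every round, that the series defining its moves converge absolutely, and that the finitely many initial rounds where $b_n$ is not defined can be disposed of by setting $W_n=0$. The probabilistic heart of the argument is the Markov-type bound $\mathbf{1}_{E_n^{(k)}}\le h(x_n)/h(b_n/k)$, available precisely because the UFQSH protocol allows Skeptic to hedge against $h$.
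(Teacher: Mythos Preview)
Your proof is correct and follows essentially the same line as the paper's. Both arguments set $M_n=V_n=0$ and $W_n\propto 1/h(\epsilon b_n)$ (you take $\epsilon=1/k$), use Proposition~\ref{pro:h}(iii) to control the cost via $h(\epsilon b_n)\ge \epsilon^3 h(b_n)$, and read off the Markov-type bound $\mathbf{1}_{\{|x_n|\ge \epsilon b_n\}}\le h(x_n)/h(\epsilon b_n)$; the only cosmetic difference is that the paper mixes over initial capitals $D=1,2,\dots$ (citing \cite{Miyabe_convrandseries}) while you equivalently scale the bets by $1/j$ and stop once the cumulative $w$-cost reaches $j$, then mix over $j$.
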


\begin{proof}
We consider the strategy with
\[\cK_0=D,\ M_n=V_n=0,\ 
W_n=\frac{1}{h(\epsilon b_n)}\]
as long as Skeptic can keep $\cK_n$ non-negative
where $\epsilon>0$ is small and $D$ is sufficiently large.
More precisely, we adopt a strategy combining accounts starting with $D=1,2,3,\ldots$
as in Miyabe and Takemura \cite{Miyabe_convrandseries}.
We show that this strategy forces \eqref{eq:x-bound}.

The capital process is
\[\cK_n=D+\sum_{i=1}^n\frac{h(x_i)}{h(\epsilon b_i)}
-\sum_{i=1}^n\frac{w_i}{h(\epsilon b_i)}.\]
By Proposition \ref{pro:h}, we have
\[h(\epsilon b_i)\ge \epsilon^3 h(b_i)\]
for all $i$.
Then
\begin{align*}
\cK_n
\ge&\cK_0+\sum_{i:|x_i|\ge\epsilon b_i}\frac{h(x_i)}{h(\epsilon b_i)}
-\sum_{i=1}^n\frac{w_i}{h(\epsilon b_i)}\\
\ge&\cK_0+\#\{1\le i\le n:|x_i|\ge\epsilon b_i\}-\frac{1}{\epsilon^3}\sum_{i=1}^n\frac{w_i}{h(b_i)}.
\end{align*}
For a large $D$, the strategy keeps $\cK_n$ non-negative.
Hence Skeptic can force that
\[\#\{1\le i\le n:|x_i|\ge\epsilon^3 b_i\}\]
is finite for each $\epsilon$.
\end{proof}

\subsection{Lower bound (sharpness)}

Next we show the lower bound of the LIL under the same assumptions.

\begin{proposition}\label{th:lower-bound}
In UFQSH with $h$ satisfying Assumption \ref{assumption:1},
Skeptic can force
\begin{align}\label{eq:lower-bound}
\left(A_n\to\infty\mbox{ and }
\sum_n \frac{w_n}{h(b_n)}<\infty\right)
\Rightarrow
\limsup_{n\to\infty}\frac{S_n}{\sqrt{2A_n\ln\ln A_n}}\ge1.
\end{align}
\end{proposition}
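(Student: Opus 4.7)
The plan is to adapt the block-decomposition argument in the lower-bound portion of Shafer--Vovk's Theorem~\ref{th:predict-lil} (predictable unbounded forecasting), with the hedge $h$ taking over the role played there by the predictable range $c_n=o(b_n)$. As a first step, by mixing the strategy of Lemma~\ref{lem:x-bound} into the one constructed below at geometrically decaying weights (as in \cite{Miyabe_convrandseries}), we may assume $|x_n|\le\epsilon b_n$ eventually on the relevant paths, for an arbitrary preselected $\epsilon>0$.

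Next, fix $\delta\in(0,\tfrac14)$ and a large $\alpha>1$, and introduce the block structure $n_k=\min\{n:A_n\ge \alpha^k\}$, $B_k=(n_k,n_{k+1}]$, with block parameter $\lambda_k=\sqrt{2\ln\ln A_{n_{k+1}}/A_{n_{k+1}}}$. The key analytic input is a Taylor-type inequality
\[
e^{\lambda x}\le 1+\lambda x+\tfrac{\lambda^2}{2}x^2+C_k\, h(x),
\]
valid uniformly in $|x|\le\epsilon b_n$ when $\lambda_k b_n$ is small, and derivable from Proposition~\ref{pro:h}. Using it inside each block, Skeptic selects $(M_n,V_n,W_n)$ proportional to his current capital so that, at the end of the block, his capital dominates the exponential supermartingale
\[
\cM^{(k)}_{n_{k+1}}=\exp\Bigl(\lambda_k(S_{n_{k+1}}-S_{n_k})-\tfrac{\lambda_k^2}{2}(A_{n_{k+1}}-A_{n_k})-C_k\sum_{i\in B_k}w_i\Bigr).
\]
The hypothesis $\sum_n w_n/h(b_n)<\infty$ combined with Proposition~\ref{pro:h}(iii) (comparing $h(b_i)$ to $h(b_{n_{k+1}})$ inside $B_k$) shows that the total penalty $\sum_k C_k\sum_{i\in B_k} w_i$ is finite, so the block supermartingales are well-normalized across all $k$.

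Finally, Skeptic combines the block strategies with appropriately weighted stakes and an auxiliary reserve account, and takes a countable mixture over $\delta=1/j$ for $j\ge 1$, to produce a single strategy whose capital tends to infinity on every path where $\limsup_n S_n/\sqrt{2A_n\ln\ln A_n}<1$. The main obstacle lies here. Contrapositively, on such a path the block events $E_k=\{S_{n_{k+1}}-S_{n_k}\ge (1-2\delta)\sqrt{2(A_{n_{k+1}}-A_{n_k})\ln\ln A_{n_{k+1}}}\}$ must fail for all large $k$ (using the upper bound Proposition~\ref{th:upper-bound} to estimate $|S_{n_k}|$). Because the individual upper prices of $\neg E_k$ are close to $1$, a naive Kelly-style Borel--Cantelli combination cannot drive the capital to infinity on the ``eventually-$\neg E_k$'' event; the finer construction from Shafer--Vovk's Theorem~5.1, which averages multiple exponential parameters and exploits the block-conditional structure of the game, must be carefully ported to our setting. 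Verifying this adaptation, and using Proposition~\ref{pro:h} to control the higher-order terms introduced by the unboundedness of $x_n$, is the technical heart of the argument.
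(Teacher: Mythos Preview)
Your outline follows the right overall architecture---block decomposition indexed by geometric levels of $A_n$, exponential martingales, and an appeal to the Shafer--Vovk lower-bound machinery---and this is indeed what the paper does. But the proposal stops precisely where the actual work begins, and the single ingredient you do write down points in the wrong direction.

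The ``key analytic input'' you state, $e^{\lambda x}\le 1+\lambda x+\tfrac{\lambda^2}{2}x^2+C_k h(x)$, yields a nonnegative martingale that \emph{dominates} $\exp(\lambda_k\Delta S_k-\tfrac{\lambda_k^2}{2}\Delta A_k-\cdots)$. That is useful for an \emph{upper} bound: it is large when $\Delta S_k$ is large. For the lower bound you need the opposite---a martingale that grows when the block increment $\Delta S_k$ stays \emph{below} $(1-\epsilon)\sqrt{2C\ln\ln C}$. A single exponential (super)martingale cannot do this; you yourself note that naive Borel--Cantelli fails, but then defer the fix to ``the finer construction from Shafer--Vovk's Theorem~5.1''. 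That construction is the entire content of the proof, and it does not port automatically once $x_n$ is unbounded.

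Concretely, the paper builds two-sided exponential approximants $\cL^{\le,\kappa}$ and $\cL^{\ge,\kappa}$ (the former using the hedge $h$ via the factor $1+\kappa x+\kappa^2x^2/2 - h(x)/h(\kappa^{-1})$, the latter not), and then takes three nearby rates $\kappa_1<\kappa_2<\kappa_3$ to form $\cM_n=3\cL_n^{\le,\kappa_2}-\cL_n^{\ge,\kappa_1}-\cL_n^{\ge,\kappa_3}$ and $\cN_n=1+(1-\cM_n)/\ln C$. The point is that (a) $\cM_n\le 0$ at $\tau_2$ when $S_{\tau_2}\le(1-\epsilon)\sqrt{2C\ln\ln C}$, so $\cN_{\tau_2}\ge 1+1/\ln C$, and (b) $\cN_n>0$ for all $n\le\tau_2$, including the round $n=\tau_3$ at which Reality may play an arbitrarily large $|x_n|$. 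Step (b) is nontrivial and uses Proposition~\ref{pro:h}(vi) to bound the one-step relative growth of $\cL^{\le,\kappa_2}$ uniformly in $x_n$; your plan of ``assume $|x_n|\le\epsilon b_n$ eventually via Lemma~\ref{lem:x-bound}'' does not address positivity at the very round where Reality violates the bound. Iterating over $C=D^k$ then gives $\prod_k(1+1/(k\ln D))=\infty$.

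Two further omissions: you never invoke the coherence constraint $h(\sqrt{v_n})\le w_n$ (Proposition~\ref{prop:vw}), which is what converts the hypothesis $\sum_n w_n/h(b_n)<\infty$ into $v_n=o(b_n^2)$ and hence controls the $\tau_1$-type stopping; and the penalty term $\sum_k C_k\sum_{i\in B_k}w_i$ you claim is finite requires identifying $C_k$ with $1/h(\kappa_k^{-1})$, not an unspecified Taylor constant.
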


For our proof of the lower bound we closely follow the line
of argument in Section 5.3 of Shafer and Vovk \cite{ShaVov01}.
Compared to Section 5.3 of Shafer and Vovk \cite{ShaVov01}
we will explicitly consider rounds before appropriate stopping times.
Also we will be more explicit in choosing $\epsilon$'s and $\delta$'s.
 
We assume that a sufficiently small $\epsilon>0$ is chosen first and fixed.  For definiteness we let 
$\epsilon<1/8$.
We choose $\epsilon^*=\epsilon^*(\epsilon)>0$ sufficiently small compared to $\epsilon$, 
choose $\delta=\delta(\epsilon,\epsilon^*)>0$ sufficiently small,  and finally 
choose $C=C(\epsilon,\epsilon^*, \delta)>0$ sufficiently large.  

More explicitly, i) $\epsilon^*$ has to satisfy
\eqref{eq:epsilon-epsilonstar}
below, 
ii) $\delta$ has to satisfy 
\eqref{eq:22delta},
\eqref{eq:2delta},
\eqref{eq:epsilon-delta-log1},
\eqref{eq:epsilon-delta-log2},
\eqref{eq:delta-small-1},
\eqref{eq:delta-log-3},
\eqref{eq:delta-C-together-1},
\eqref{eq:epsilon-epsilonstar},
\eqref{eq:epsilon-delta-nn-positve-2}
below,  
and iii) $C$ has to satisfy
\eqref{eq:C-large-1},
\eqref{eq:delta-C-together-1},
\eqref{eq:C-delta},
\eqref{eq:tau3-positive}
below.

Let $\kappa$ be such that
\[
\kappa\le\sqrt{\frac{2\ln\ln C}{C}}.\]
Define stopping time $\tau_1,\tau_2,\tau_3$ 
by
\begin{align*}
&\tau_1=\min\left\{n\mid v_n>\delta^2\frac{C}{\ln\ln C}, w_n>\delta h\left(\sqrt{\frac{C}{\ln\ln C}}\right)
\right. \\
& \qquad \qquad \qquad \left. \mbox{ or } 
\sum_{i=1}^n w_i>\delta h\left(\sqrt{\frac{C}{\ln\ln C}}\right)\ln\ln C \right\},\\
&\tau_2=\min\{n\mid A_n\ge C\},\\
&\tau_3=\min\left\{n\mid |x_n|>\delta\sqrt{\frac{C}{\ln\ln C}}\right\}.
\end{align*}

In the following, we use a capital process that may be negative,
which is not allowed by the collateral duties,
in order to construct a non-negative capital process.
When Skeptic 
is allowed to sell tickets at the same price at which he can buy them, 
we say that the protocol is \emph{symmetric}.
The game of UFQSH is symmetric.
We call a capital process for Skeptic in a symmetric protocol
a (game-theoretic) \emph{martingale}.

\subsubsection{Approximations}

\begin{lemma}\label{lem:L-upperbound}
In UFQSH with $h$ satisfying Assumption \ref{assumption:1},
there exists a martingale $\cL_n=\cL_n^{\le,\kappa}$ such that $\cL(\Box)=1$
and
\begin{align}\label{eq:L-upperbound}
\frac{\cL_n}{\exp(\kappa \cS_n-\kappa^2C/2)}\le(\ln C)^{4\delta}
\end{align}
for $n$ such that $n=\tau_2<\tau_1,\tau_3$.
Furthermore $\cL_n$ is positive and
\begin{align}\label{eq:L-upperbound-small-n}
\frac{\cL_n}{\exp(\kappa \cS_n-(1-\delta)\kappa^2 A_n/2)}\le(\ln C)^{4\delta}
\end{align}
for $n$ such that $n\le\tau_2$ and $n<\tau_1,\tau_3$.
\end{lemma}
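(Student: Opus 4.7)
My plan is to construct $\cL_n$ multiplicatively as $\cL_n=\prod_{i=1}^n f_i(x_i)$, where each factor
\[
f_i(x) \;=\; 1 + \kappa x + \nu_i(x^2 - v_i) + \omega_i(h(x) - w_i)
\]
is of the form produced by a single move in UFQSH with $\nu_i,\omega_i$ deterministic functions of $v_i,w_i$ alone; this automatically makes $\cL_n$ a martingale in the symmetric protocol with $\cL_0=1$. The coefficients will be chosen small enough that on the event $n<\tau_3$, where $|x_i|\le\delta\sqrt{C/\ln\ln C}$, every factor $f_i(x_i)$ remains positive, and the strategy is frozen (setting $M=V=W=0$) at later rounds so that $\cL_n$ is defined for all $n$.

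The heart of the argument is a per-round pointwise estimate
\[
f_i(x) \;\le\; \exp\!\Bigl(\kappa x - (1-\delta)\tfrac{\kappa^2}{2}v_i + \eta\,\tfrac{w_i}{h(b)}\Bigr), \qquad x\in\mathbb{R},
\]
with $b=\sqrt{C/\ln\ln C}$ and a small absolute constant $\eta$. Telescoping then yields
\[
\cL_n \;\le\; \exp\!\Bigl(\kappa S_n - (1-\delta)\tfrac{\kappa^2}{2}A_n + \tfrac{\eta}{h(b)}\textstyle\sum_{i\le n} w_i\Bigr).
\]
For $n\le\tau_2$ with $n<\tau_1$, the third condition defining $\tau_1$ bounds $\sum_{i\le n} w_i \le \delta h(b)\ln\ln C$, so the last term in the exponent contributes at most $\eta\delta\ln\ln C$, i.e., a $(\ln C)^{\eta\delta}$ factor, which proves \eqref{eq:L-upperbound-small-n} provided $\eta\le 4$. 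When in addition $n=\tau_2$, the bound $A_n\ge C$ together with $\kappa^2\le 2\ln\ln C/C$ lets us trade $-(1-\delta)\kappa^2 A_n/2$ for $-\kappa^2 C/2$ at a cost of at most $\delta\ln\ln C$ in the exponent, i.e., an extra $(\ln C)^\delta$ factor, yielding \eqref{eq:L-upperbound}.

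The main obstacle is establishing the pointwise inequality. I would split into two regimes. For $|x|\le b$, a Taylor expansion of the right-hand side around $x=0$ forces $\nu_i$ close to $\kappa^2/2$ and leaves a cubic remainder $O(\kappa^3 x^3)$, which can be absorbed by $\omega_i h(x)$ since Proposition \ref{pro:h}(iii) gives $h(x)\ge(|x|/b)^3 h(b)$ on this range; this dictates $\omega_i$ of order $\kappa^2/h(b)$, and the resulting $-\nu_i v_i-\omega_i w_i$ summand in $f_i$ is precisely what produces the $-(1-\delta)\kappa^2 v_i/2$ and $+\eta w_i/h(b)$ shifts on the right. For $|x|>b$, one applies Proposition \ref{pro:h}(vi) with a suitable scaling to dominate the linear and quadratic parts of $f_i$ by a multiple of $h(x)/h(b)$ plus an absolute constant, which then fits under the exponential on the right thanks to $x^2=o(h(x))$ from Proposition \ref{pro:h}(iv). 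The delicate part is tuning the constants so that $\eta$ comes out as a fixed absolute constant independent of $C$ and $\delta$, ensuring the clean $(\ln C)^{4\delta}$ factor; positivity of $f_i(x_i)$ for $|x_i|\le\delta b$ has to be verified against the same choice of $\omega_i$, which is where the prescribed smallness of $\delta$ is used.
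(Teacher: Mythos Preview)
Your plan is essentially the paper's approach: build $\cL_n$ as a product of UFQSH increments with the $h$-ticket coefficient taken \emph{negative} so that the $-|\omega_i|h(x_i)$ term kills the super-quadratic part of $e^{\kappa x_i}$, then control the $v_i,w_i$ costs via the bounds defining $\tau_1$. The paper's concrete choice is
\[
\frac{\cL_i}{\cL_{i-1}}=\frac{1+\kappa x_i+\tfrac{1}{2}\kappa^2 x_i^2-h(x_i)/h(\kappa^{-1})}{1+\tfrac{1}{2}\kappa^2 v_i-w_i/h(\kappa^{-1})},
\]
so effectively $\nu_i=\kappa^2/2$ and $\omega_i=-1/h(\kappa^{-1})$ up to the normalizing denominator; separating numerator and denominator lets the paper prove the clean \emph{global} inequality $1+\kappa x+\tfrac12\kappa^2 x^2-h(x)/h(\kappa^{-1})\le e^{\kappa x}$ by a short case split on the sign and size of $\kappa x$ (Proposition~\ref{pro:h}(iii) is only invoked for $-1<\kappa x<0$), with no $v_i,w_i$ in sight. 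Your bundled per-round inequality is workable but messier.

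There is one genuine slip: the magnitude of $\omega_i$ must be of order $1/h(b)$ (equivalently $1/h(\kappa^{-1})$), not $\kappa^2/h(b)$. To absorb the cubic remainder $\kappa^3|x|^3/6$ via the lower bound $h(x)\ge(|x|/b)^3 h(b)$ you need $|\omega_i|\,h(b)/b^3\gtrsim\kappa^3$, i.e.\ $|\omega_i|\gtrsim\kappa^3 b^3/h(b)\asymp 1/h(b)$ since $\kappa b\asymp 1$; with $|\omega_i|\sim\kappa^2/h(b)$ the absorption fails by a factor of $b^2=C/\ln\ln C\to\infty$. With the correct scaling the $w_i$-cost becomes $|\omega_i|\,w_i\asymp w_i/h(b)$, and summing against the $\tau_1$ bound $\sum_{i\le n}w_i\le\delta\,h(b)\ln\ln C$ yields the $(\ln C)^{O(\delta)}$ factor, matching the paper's $(\ln C)^{4\delta}$ once the $A_n\ge C$ conversion adds its $\delta\ln\ln C$. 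After fixing $\omega_i$, your outline goes through.
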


\begin{proof}
Consider the martingale $\cL$ satisfying $\cL(\Box)=1$ and
\[\cL_{i}=\cL_{i-1}\frac{1+\kappa x_i+\frac{\kappa^2 x_i^2}{2}-\frac{h(x_i)}{h(\kappa^{-1})}}
{1+\frac{\kappa^2v_i}{2}-\frac{w_i}{h(\kappa^{-1})}}\]
for all $i$.

We show that $\cL_n$ is positive for $n<\tau_1,\tau_3$.
First we prove that
\[1+\frac{\kappa^2v_i}{2}-\frac{w_i}{h(\kappa^{-1})}>0.\]
Note that
\[\frac{1}{\sqrt{2}}\cdot\sqrt{\frac{C}{\ln\ln C}}\le\kappa^{-1}.\]
Then
\begin{align}\label{eq:h-kappa}
h(\kappa^{-1})
\ge h\left(\frac{1}{\sqrt{2}}\cdot\sqrt{\frac{C}{\ln\ln C}}\right)
\ge \frac{1}{2\sqrt{2}}h\left(\sqrt{\frac{C}{\ln\ln C}}\right).
\end{align}
For $i<\tau_1$, we have
\[w_i\le \delta h\left(\sqrt{\frac{C}{\ln\ln C}}\right).\]
Then
\[\delta h(\kappa^{-1})\ge\frac{\delta}{2\sqrt{2}} h\left(\sqrt{\frac{C}{\ln\ln C}}\right)
\ge \frac{w_i}{2\sqrt{2}}\]
and
\begin{align}\label{eq:22delta}
\frac{w_i}{h(\kappa^{-1})}\le2\sqrt{2}\delta<1.
\end{align}
Hence
\[1+\frac{\kappa^2v_i}{2}-\frac{w_i}{h(\kappa^{-1})}
>1-\frac{w_i}{h(\kappa^{-1})}>0.\]

Next we prove that
\begin{align*}
1+\kappa x_i+\frac{\kappa^2 x_i^2}{2}-\frac{h(x_i)}{h(\kappa^{-1})}>0
\end{align*}
for $i<\tau_1,\tau_3$.
For $i<\tau_3$, we have
\begin{align}\label{eq:2delta}
|\kappa x_i|
\le\sqrt{\frac{2C}{\ln\ln C}}\cdot\delta\sqrt{\frac{C}{\ln\ln C}}
=\sqrt{2}\delta
<1.
\end{align}
Then
\[h(x_i)=h(\kappa x_i\cdot\kappa^{-1})\le|\kappa x_i|^2h(\kappa^{-1})
\le2\delta^2 h(\kappa^{-1}).\]

Next we show the inequality \eqref{eq:L-upperbound-small-n} for this $\cL_n$.
We claim that
\begin{align}\label{eq:compare-ex}
1+\kappa x_i+\frac{\kappa^2x_i^2}{2}-\frac{h(x_i)}{h(\kappa^{-1})}\le e^{\kappa x_i}.
\end{align}
for all $i$.
If $\kappa x_i\ge0$, then this inequality clearly holds.
If $\kappa x_i\le-1$, then 
\[1+\kappa x_i\le0\]
and
\[h(x_i)=h(\kappa^{-1}\kappa x_i)\ge|\kappa x_i|^2h(\kappa^{-1}),\]
thus the left-hand side of \eqref{eq:compare-ex} is non-positive.
If $-1<\kappa x_i<0$, then
\[h(x_i)=h(\kappa^{-1}\kappa x_i)\ge|\kappa x_i|^3h(\kappa^{-1}),\]
thus
\[1+\kappa x_i+\frac{\kappa^2x_i^2}{2}-\frac{h(x_i)}{h(\kappa^{-1})}
\le1+\kappa x_i+\frac{\kappa^2x_i^2}{2}+\frac{\kappa^3x_i^3}{6}
\le e^{\kappa x_i}.\]
Then
\begin{align*}
\prod_{i=1}^n (1+\kappa x_i+\kappa^2x_i^2/2-h(x_i)/h(\kappa^{-1}))
\le\prod_{i=1}^ne^{\kappa x_i}=e^{\kappa S_n}.
\end{align*}

Note that
\begin{equation}
\label{eq:epsilon-delta-log1}
0\le t\le \delta\Rightarrow \ln(1+t)\ge(1-\delta)t
\end{equation}
for sufficiently small $\delta$
and
\[\frac{\kappa^2 v_i}{2}\le\frac{2\ln\ln C}{C}\cdot\delta^2\frac{C}{\ln\ln C}\frac{1}{2}
=\delta^2.\]
Then if
\[\frac{\kappa^2 v_i}{2}-w_i/h(\kappa^{-1})\ge0,\]
we have
\[\ln(1+\frac{\kappa^2 v_i}{2}-w_i/h(\kappa^{-1}))\ge(1-\delta)\frac{\kappa^2 v_i}{2}-(1-\delta)w_i/h(\kappa^{-1}).\]

Note that
\begin{equation}
\label{eq:epsilon-delta-log2}
0\le t\le \delta\Rightarrow \ln(1-t)\ge-(1+\delta)t
\end{equation}
for sufficiently small $\delta$
and
\[\frac{w_i}{h(\kappa^{-1})}\le 2\sqrt{2}\delta.\]
for $i\le n<\tau_1$ by the fact that
$w_i\le\delta h(\sqrt{\frac{C}{\ln\ln C}})$ for $i\le n<\tau_1$ and \eqref{eq:h-kappa}.
Thus, if
\[\frac{\kappa^2 v_i}{2}-w_i/h(\kappa^{-1})<0,\]
then
\[\ln(1+\frac{\kappa^2 v_i}{2}-w_i/h(\kappa^{-1}))\ge(1+2\sqrt{2}\delta)\frac{\kappa^2 v_i}{2}-(1+2\sqrt{2}\delta)w_i/h(\kappa^{-1}).\]

By combining them, we have
\[\ln(1+\frac{\kappa^2 v_i}{2}-w_i/h(\kappa^{-1}))\ge(1-\delta)\frac{\kappa^2 v_i}{2}-(1+2\sqrt{2}\delta)w_i/h(\kappa^{-1}).\]
Thus
\begin{align*}
\sum_{i=1}^n\ln(1+\frac{\kappa^2 v_i}{2}-w_i/h(\kappa^{-1}))
\ge\frac{(1-\delta)\kappa^2}{2}\sum_{i=1}^nv_i
-(1+2\sqrt{2}\delta)/h(\kappa^{-1})\sum_{i=1}^nw_i
\end{align*}
and
\[\ln \cL_n
\le\kappa S_n-\frac{(1-\delta)\kappa^2}{2}\sum_{i=1}^n v_i
+\frac{(1+2\sqrt{2}\delta)}{h(\kappa^{-1})}\sum_{i=1}^n w_i.\]
By the inequality \eqref{eq:h-kappa}, we have
\[h(\kappa^{-1})
\ge \frac{1}{2\sqrt{2}}h\left(\sqrt{\frac{C}{\ln\ln C}}\right).\]
Hence
\begin{align*}
\sum_{i=1}^n w_i
\le\delta h\left(\sqrt{\frac{C}{\ln\ln C}}\right)\ln\ln C
\le2\sqrt{2}\delta h(\kappa^{-1})\ln\ln C
\end{align*}
for $n<\tau_1$.
Thus
\begin{align*}
\ln \cL_n
&\le\kappa S_n-\frac{(1-\delta)\kappa^2}{2}\sum_{i=1}^n v_i
+2\sqrt{2}\delta(1+2\sqrt{2}\delta)\ln\ln C\\
&\le\kappa S_n-\frac{(1-\delta)\kappa^2}{2} A_n
+4\delta\ln\ln C
\end{align*}
for sufficiently small $\delta$ 
such that
\begin{equation}
\label{eq:delta-small-1}
2\sqrt{2} (1+2\sqrt{2}\delta) < 3.
\end{equation}
Hence \eqref{eq:L-upperbound-small-n} is proved.

The inequality above also implies \eqref{eq:L-upperbound}
because, for $n=\tau_2$,
\begin{align*}
\ln \cL_n -\kappa \cS_n+\frac{\kappa^2C}{2}
\le&\frac{\kappa^2C}{2}-\frac{(1-\delta)\kappa^2C}{2}
+2\sqrt{2}\delta(1+\delta)\ln \ln C\\
\le&\delta\ln\ln C+2\sqrt{2}\delta(1+\delta)\ln\ln C\\
<&4\delta\ln\ln C.
\end{align*}

\end{proof}

\begin{lemma}\label{lem:L-lowerbound}
In UFQSH with $h$ satisfying Assumption \ref{assumption:1},
there exists a positive martingale $\cL_n=\cL^{\ge,\kappa}$ such that $\cL(\Box)=1$,
\begin{equation}
\label{eq:L-lowerbound}
\frac{\cL_n}{\exp(\kappa \cS_n-\kappa^2C/2)}\ge(\ln C)^{-4\delta}
\end{equation}
for $n$ such that $n=\tau_2<\tau_1,\tau_3$.  Furthermore
\begin{equation}
\label{eq:L-lowerbound-small-n}
\frac{\cL_n}{\exp(\kappa \cS_n-(1+\delta)\kappa^2 A_n/2)}\ge 1.
\end{equation}
for $n$ such that $n\le\tau_2$ and $n<\tau_1,\tau_3$.
\end{lemma}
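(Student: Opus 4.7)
My plan is to mirror the construction of Lemma~\ref{lem:L-upperbound} with two modifications: the coefficient $\kappa^2/2$ of the quadratic is enlarged to $(1+\delta)\kappa^2/2$, and the $h$-hedge is dropped entirely. Set
\[
\cL_i^{\ge,\kappa} = \cL_{i-1}^{\ge,\kappa}\cdot\frac{1+\kappa x_i+(1+\delta)\kappa^2 x_i^2/2}{1+(1+\delta)\kappa^2 v_i/2},\qquad \cL_0^{\ge,\kappa}=1.
\]
Denoting the denominator by $d_i$, this corresponds to the Skeptic moves $M_i=\kappa\cL_{i-1}/d_i$, $V_i=(1+\delta)\kappa^2\cL_{i-1}/(2d_i)$, $W_i=0$, so it is a legal strategy in UFQSH. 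The denominator is at least $1$; the numerator, viewed as a quadratic in $x_i$, has discriminant $\kappa^2-2(1+\delta)\kappa^2=-\kappa^2(1+2\delta)<0$, hence is strictly positive for every $x_i\in\mathbb R$. Consequently $\cL_n^{\ge,\kappa}>0$ unconditionally, and it is the required positive martingale.

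The core inequality is the pointwise bound
\[
1+\kappa x_i+(1+\delta)\kappa^2 x_i^2/2 \ \ge\ e^{\kappa x_i}\qquad(i<\tau_3),
\]
from which $\ln(\text{num}_i)\ge\kappa x_i$ follows. Writing $t=\kappa x_i$ and using $|x_i|\le\delta\sqrt{C/\ln\ln C}$ together with $\kappa\le\sqrt{2\ln\ln C/C}$, one has $|t|\le\sqrt 2\,\delta<1$. For $t\le0$ the classical estimate $1+t+t^2/2\ge e^t$ applies (check that $f(t)=1+t+t^2/2-e^t$ vanishes at $0$ while $f'(t)=1+t-e^t<0$ for $t<0$), and the extra $\delta t^2/2\ge0$ only helps. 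For $0\le t\le\sqrt 2\,\delta$, the Taylor remainder obeys $e^t-1-t-t^2/2\le t^3/(6(1-t))$, so it suffices to verify $\delta t^2/2\ge t^3/(6(1-t))$, i.e.\ $t\le 3\delta(1-t)$; this holds comfortably since $t\le\sqrt 2\,\delta<2\delta$ and $\delta$ is small.

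Combining this with $\ln(1+y)\le y$ for the denominator gives $\ln\cL_n\ge\kappa S_n-(1+\delta)\kappa^2 A_n/2$, which is exactly \eqref{eq:L-lowerbound-small-n}. To deduce \eqref{eq:L-lowerbound} I specialize to $n=\tau_2<\tau_1$: since each $v_i\le\delta^2 C/\ln\ln C$ for $i\le\tau_2<\tau_1$, one has $A_{\tau_2}\le C+\delta^2 C/\ln\ln C$, and substituting $\kappa^2\le 2\ln\ln C/C$ yields
\[
(1+\delta)\kappa^2 A_n/2\ \le\ \kappa^2 C/2+\delta\ln\ln C+O(\delta^2/\ln\ln C).
\]
Shrinking $\delta$ to absorb the absolute constant produces the required slack of at most $4\delta\ln\ln C$.

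The main obstacle is the Taylor-type pointwise inequality in the second step: one must confirm that the $(1+\delta)$-enhancement of the quadratic coefficient alone suffices to dominate $e^t$ over the full range $|t|\le\sqrt 2\,\delta$ imposed by $\tau_3$, so that no $h$-hedge is needed. Once this verification is made the rest of the argument parallels Lemma~\ref{lem:L-upperbound} with the inequalities reversed and is essentially routine bookkeeping.
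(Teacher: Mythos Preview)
Your construction and argument coincide with the paper's proof: the same martingale $\cL_i=\cL_{i-1}\,\dfrac{1+\kappa x_i+(1+\delta)\kappa^2 x_i^2/2}{1+(1+\delta)\kappa^2 v_i/2}$, the same pointwise bound $1+t+(1+\delta)t^2/2\ge e^t$ on $|t|\le\sqrt2\,\delta$ (which the paper simply records as condition \eqref{eq:delta-log-3} without the case split you supply), and the same use of $\ln(1+y)\le y$ and $A_{\tau_2}\le C+\delta^2 C/\ln\ln C$ for the final estimate. One minor slip: in the last display the remainder term is $(1+\delta)\delta^2$, not $O(\delta^2/\ln\ln C)$; the paper absorbs it by taking $C$ large (condition \eqref{eq:C-large-1}: $3\ln\ln C>\delta(1+\delta)$) rather than by further shrinking $\delta$, but either route yields the bound $4\delta\ln\ln C$.
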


The proof is the same as Lemma 5.2 in Shafer and Vovk \cite{ShaVov01}, 
except that we also explicitly consider $n < \tau_2$.

\begin{proof}
Let
\[f(t)=1+t+(1+\delta)\frac{t^2}{2}\]
and consider the martingale $\cL$ satisfying $\cL(\Box)=1$ and
\[\cL_{i}=\cL_{i-1}\frac{1+\kappa x_i+(1+\delta)\kappa^2 x_i^2/2}{1+(1+\delta)\kappa^2v_i/2}
=\cL_{i-1} \frac{f(\kappa x_i)}{1+(1+\delta)\kappa^2v_i/2}\]
for all $i$.
For $i<\tau_3$,
\[|\kappa x_i|\le \sqrt{\frac{2\ln\ln C}{C}}\cdot\delta\sqrt{\frac{C}{\ln\ln C}}
= \sqrt{2}\delta.\]
Since
\begin{equation}
\label{eq:delta-log-3}
|t|\le \sqrt{2}\delta\Rightarrow 1+t+(1+\delta)\frac{t^2}{2}\ge e^t,
\end{equation}
for sufficiently small $\delta$
we have
\[\prod_{i=1}^n f(\kappa x_i)\ge\prod_{i=1}^n e^{\kappa x_i}=e^{\kappa S_n}.\]
Since $\ln(1+t)\le t$,
\[\sum_{i=1}^n\ln(1+(1+\delta)\frac{\kappa^2 v_i}{2})
\le(1+\delta)\sum_{i=1}^n\frac{\kappa^2v_i}{2}.\]
It follows that
\begin{align*}
\ln \cL_n
\ge\kappa S_n-(1+\delta)\frac{\kappa^2}{2}\sum_{i=1}^n v_i
=\kappa S_n-(1+\delta)\frac{\kappa^2}{2} A_n.
\end{align*}
Hence \eqref{eq:L-lowerbound-small-n} is proved.

The last inequality implies \eqref{eq:L-lowerbound}
because, for $n=\tau_2$,
\begin{align*}
\ln \cL_n -\kappa \cS_n+\frac{\kappa^2C}{2}
\ge&\frac{\kappa^2C}{2}-(1+\delta)\frac{\kappa^2}{2}\left(C+\delta^2\frac{C}{\ln\ln C}\right)\\
=&-\delta\frac{\kappa^2}{2}C-(1+\delta)\frac{\kappa^2}{2}\delta^2\frac{C}{\ln\ln C}\\
\ge&-\delta\ln\ln C-(1+\delta)\delta^2\\
\ge&-4\delta\ln\ln C
\end{align*}
for sufficiently large $C$ such that
\begin{equation}
\label{eq:C-large-1}
3\ln \ln C > \delta(1+\delta).
\end{equation}
\end{proof}

\subsubsection{Construction of a martingale}

\begin{lemma}\label{lem:N}
Choose $C$ sufficiently large for a given $\epsilon$. 
In UFQSH with $h$ satisfying Assumption \ref{assumption:1},
there exists a martingale $\cN$ such that
\begin{enumerate}
\item $\cN(\Box)=1$,
\item For $n$ such that $n=\tau_2<\tau_1,\tau_3$ and
\[S_n\le(1-\epsilon)\sqrt{2C\ln\ln C},\]
we have
\[\cN_n\ge1+\frac{1}{\ln C}\]
\item $\cN_n$ is positive for $n$ such that $n <\tau_1$, $n\le\tau_2$ and
$n\le\tau_3$.
\end{enumerate}
\end{lemma}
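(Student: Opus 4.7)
The plan is to construct $\cN$ by short-selling the martingale $\cL^{\le,\kappa}$ of Lemma \ref{lem:L-upperbound} for a $\kappa$ chosen somewhat larger than the critical LIL scale $\sqrt{2\ln\ln C/C}$. Specifically, I would take $\kappa := 2\sqrt{2\ln\ln C/C}$, so that $\kappa^2 C/2 = 4\ln\ln C$. Then on the event $n=\tau_2<\tau_1,\tau_3$ with $S_n\le(1-\epsilon)\sqrt{2C\ln\ln C}$, the exponent $\kappa S_n - \kappa^2 C/2$ is at most $-4\epsilon\ln\ln C$, and Lemma \ref{lem:L-upperbound} yields
\[
\cL^{\le,\kappa}_{\tau_2} \le (\ln C)^{4\delta-4\epsilon},
\]
which tends to zero as $C\to\infty$ provided $\delta<\epsilon$ (ensured by the parameter hierarchy fixed at the start of the section).

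I then set $\cN_n := (1+\gamma) - \gamma\,\cL^{\le,\kappa}_n$, with $\gamma>0$ to be tuned. This is a game-theoretic martingale in the symmetric protocol with $\cN(\Box)=1$. On the target event at $\tau_2$,
\[
\cN_{\tau_2} \ge (1+\gamma) - \gamma\cdot(\ln C)^{4\delta-4\epsilon} \ge 1+\gamma/2,
\]
which exceeds $1+1/\ln C$ as long as $\gamma \ge 2/\ln C$ and $C$ is sufficiently large, so I would take $\gamma = 2/\ln C$.

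The main obstacle is securing positivity of $\cN_n$ throughout the safe range $n<\tau_1$, $n\le\tau_2$, $n\le\tau_3$, which amounts to the uniform bound $\cL^{\le,\kappa}_n \le (1+\gamma)/\gamma \approx \ln C/2$. Although $\cL^{\le,\kappa}$ is a positive martingale whose per-step multiplicative factors are bounded by $(1+O(\delta))/(1-O(\delta))$ on this range (using $|\kappa x_i|=O(\delta)$ from $i<\tau_3$, $w_i/h(\kappa^{-1})=O(\delta)$ from $i<\tau_1$, and $h(x_i)\ge 0$), the martingale can still compound whenever $S_n$ spikes transiently. I would handle this by stopping the short position at the first time $\cL^{\le,\kappa}_n$ reaches a safety level set well below $(1+\gamma)/\gamma$, so that the bounded single-step multiplier keeps $\cN_n\ge 0$ at the stopping step, while freezing $\cN$ thereafter. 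Lemma \ref{lem:L-upperbound}'s companion bound $\cL^{\le,\kappa}_n \le (\ln C)^{4\delta}\exp(\kappa S_n - (1-\delta)\kappa^2 A_n/2)$ then forces any such stopping to occur only when $S_n$ has already approached the LIL threshold, so a small parallel long wager activated at the stopping time should recover the $1+1/\ln C$ boost for $\cN_{\tau_2}$. Coordinating the safety threshold with $\gamma$, $\delta$, $\epsilon^*$, and $C$ from the parameter hierarchy closes the argument.
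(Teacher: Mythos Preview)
Your construction has a genuine gap in part (iii), and the paper's proof takes a substantially different route precisely to avoid the difficulty you identify but do not resolve.

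First a minor point: your choice $\kappa = 2\sqrt{2\ln\ln C/C}$ exceeds the bound $\kappa \le \sqrt{2\ln\ln C/C}$ assumed for Lemma~\ref{lem:L-upperbound}, so you cannot cite that lemma as stated. This is repairable by re-deriving the estimates with larger constants, so it is not the real problem.

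The real problem is your treatment of positivity. If you stop the short position in $\cL^{\le,\kappa}$ at a safety level $L$ close to $(1+\gamma)/\gamma \approx (\ln C)/2$, then $\cN$ is frozen at a value close to $0$, not close to $1$. To rescue (ii) you gesture at ``a small parallel long wager activated at the stopping time'', but this is not a usable plan: the wager would have to be financed out of $\cN_\sigma$, which is near $0$; you do not specify which $\kappa'$ it uses; and your claim that stopping ``only occurs when $S_n$ has already approached the LIL threshold'' is incorrect. From \eqref{eq:L-upperbound-small-n}, $\cL^{\le,\kappa}_n \ge (\ln C)/2$ with $A_n$ small only forces $\kappa S_n \gtrsim \ln\ln C$, i.e.\ $S_n \gtrsim \tfrac{1}{2\sqrt{2}}\sqrt{C\ln\ln C}$, well below $(1-\epsilon)\sqrt{2C\ln\ln C}$. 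So the path can trigger your stop early, drift back down, and arrive at $\tau_2$ with $S_{\tau_2}\le(1-\epsilon)\sqrt{2C\ln\ln C}$ while your $\cN$ sits frozen near $0$.

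The paper avoids stopping altogether by a three-$\kappa$ sandwich. With $\kappa_1<\kappa_2<\kappa_3$ all of order $\sqrt{2\ln\ln C/C}$, it sets
\[
\cM_n = 3\cL_n^{\le,\kappa_2} - \cL_n^{\ge,\kappa_1} - \cL_n^{\ge,\kappa_3}, \qquad \cN_n = 1 + \frac{1-\cM_n}{\ln C}.
\]
The long position $\cL^{\ge,\kappa_3}$ with $\kappa_3>\kappa_2$ is the missing ingredient: whenever $S_n$ is large enough to make $\cL^{\le,\kappa_2}$ dangerous, $\cL^{\ge,\kappa_3}$ is larger still (by \eqref{eq:L-lowerbound-small-n}), so $\cM_n$ stays controlled and $\cN_n>0$ with no stopping. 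At $n=\tau_2$ with $S_n$ small, the comparison $3\cL^{\le,\kappa_2}_n \le \cL^{\ge,\kappa_1}_n$ yields $\cM_n\le 0$ and hence $\cN_n \ge 1+1/\ln C$. This built-in hedge, rather than an ad hoc restart, is what makes the argument close.
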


\begin{proof}
Choose $\epsilon^*$ and $\delta$ sufficiently small and 
$C$ sufficiently large.
Let
\[
\kappa_1 = (1-\epsilon) \sqrt{\frac{2\ln\ln C}{C}}, \quad
\kappa_2 = (1+\epsilon^*) \kappa_1, \quad
\kappa_3 = (1+\epsilon^*) \kappa_2.
\]
Define a  martingale $\cM_n$ by
\[\cM_n=3\cL_n^{\le,\kappa_2} -\cL_n^{\ge,\kappa_1} - \cL_n^{\ge,\kappa_3},\]
where $\cL_n^{\le,\kappa}$ is the martingale bounded from above in 
Lemma \ref{lem:L-upperbound} and  $\cL_n^{\ge,\kappa}$ is the martingale
bounded from below in Lemma \ref{lem:L-lowerbound}. Furthermore define
$\cN_n$ by
\[\cN_n=1+\frac{1-\cM_n}{\ln C}.\]
Since $\cM(\Box)=1$, $\cN(\Box)=1$.

First we prove that $\cM_n\le0$
for $n=\tau_2<\tau_1,\tau_3$ and $S_n\le(1-\epsilon)\sqrt{2C\ln\ln C}$.
The value $\cM_n$ is bounded from above by
\begin{align*}
\cM_n &\le 3\cL_n^{\le,\kappa_2} -\cL_n^{\ge,\kappa_1}\\
&\le 3\exp((1+\epsilon^*)\kappa_1\cS_n-(1+\epsilon^*)^2\kappa_1^2C/2)(\ln C)^{4\delta}\\
&\ \ -\exp(\kappa_1\cS_n-\kappa_1^2C/2)(\ln C)^{-4\delta}\\
&=\exp(\kappa_1\cS_n-\kappa_1^2C/2)(\ln C)^{-4\delta}\\
&\ \ \times(3\exp(\epsilon^*\kappa_1\cS_n-\epsilon^*(2+\epsilon^*)\kappa_1^2C/2)(\ln C)^{8\delta}-1).
\end{align*}
This is negative because 
\begin{align*}
\epsilon^*\kappa_1S_n-\epsilon^*(2+\epsilon^*)\kappa_1^2C/2
\le&\epsilon^*(1-\epsilon)^22\ln\ln C-\epsilon^*(2+\epsilon^*)(1-\epsilon)^2\ln\ln C\\
\le&-(\epsilon^*)^2(1-\epsilon)^2\ln\ln C\\
<&-\ln3-8\delta\ln\ln C
\end{align*}
for sufficiently small $\delta$ and sufficiently large $C$ such that
\begin{equation}
\label{eq:delta-C-together-1}
8\delta < \frac{1}{2} (\epsilon^*)^2(1-\epsilon)^2, \qquad 
\frac{1}{2} (\epsilon^*)^2(1-\epsilon)^2 \ln\ln C > \ln 3.
\end{equation}

Next we prove that $\cN_n$ is positive for $n$
such that $n<\tau_1$, $n\le\tau_2$ and $n\le\tau_3$.
First we consider the case that $n\le\tau_2$ and $n<\tau_1,\tau_3$.
We distinguish two cases depending on the value of $S_n$.
Consider the case that 
\[
S_n < \kappa_3 A_n + \frac{5\delta\ln\ln C}{\kappa_2\epsilon^*}.
\]
Then by
Lemma \ref{lem:L-upperbound}
\begin{align*}
\ln \cL_n^{\le,\kappa_2} 
&\le \kappa_2 S_n - (1-\delta)\frac{\kappa_2^2}{2} A_n + 4\delta\ln\ln C\\
&\le  \kappa_2 (\kappa_3 A_n + \frac{5\delta\ln\ln C}{\kappa_2\epsilon^*}) 
- (1-\delta)\frac{\kappa_2^2}{2} A_n + 4\delta\ln\ln C \\
&=\frac{\kappa_2^2}{2}A_n(2(1+\epsilon^*) - (1-\delta)) + \frac{5+4\epsilon^*}{\epsilon^*}\delta\ln\ln C \\
&= \frac{\ln\ln C}{C}A_n(1+\epsilon^*)^2 (1 + 2\epsilon^* +\delta)(1-\epsilon)^2 + \frac{5+4\epsilon^*}{\epsilon^*}\delta\ln\ln C.
\end{align*}
Note that
$A_{n-1}<C$ and $v_n\le\delta^2\frac{C}{\ln\ln C}$ 
for $n$ such that $n\le\tau_2$ and $n<\tau_1$,
thus
\[A_n=A_{n-1}+v_n\le (1+\frac{\delta^2}{\ln\ln C})C\le(1+\delta)C\]
for $C$ such that
\begin{equation}\label{eq:C-delta}
\ln\ln C\ge \delta.
\end{equation}
Then
\[\frac{\ln \cL_n^{\le,\kappa_2}}{\ln\ln C}
\le(1+\delta)(1+\epsilon^*)^2 (1 + 2\epsilon^* +\delta)(1-\epsilon)^2 + \frac{5+4\epsilon^*}{\epsilon^*}\delta.\]
We can assume that
\begin{equation}
\label{eq:epsilon-epsilonstar}
c_\epsilon:=(1+\delta)(1+\epsilon^*)^2 (1 + 2\epsilon^* +\delta)(1-\epsilon)^2 + \frac{5+4\epsilon^*}{\epsilon^*}\delta < 1.
\end{equation}
Then we have
\begin{equation}
\label{eq:nn-positive-1}
\frac{\cL_n^{\le,\kappa_2}}{\ln C} \le (\ln C)^{c_\epsilon-1}  \rightarrow 0 \qquad ( C\rightarrow\infty)
\end{equation}
and in this case $\cN_n$ is positive for large $C$.

Now consider the other case $S_n \ge \kappa_3 A_n+  5\delta\ln\ln C/(\kappa_2\epsilon^*)$.  Then
\begin{align*}
\ln\frac{\cL_n^{\le,\kappa_2}}{\cL_n^{\ge,\kappa_3}}&\le 
\kappa_2 S_n - (1-\delta)\frac{\kappa_2^2}{2} A_n + 4\delta\ln\ln C - (\kappa_3 S_n - (1+\delta) \frac{\kappa_3^2}{2} A_n)\\
&=(\kappa_2 - \kappa_3) S_n + \frac{A_n}{2} \big((1+\delta) \kappa_3^2 - (1-\delta) \kappa_2^2\big) + 4\delta\ln\ln C\\
&= -\epsilon^*\kappa_2S_n + \frac{\kappa_2^2}{2}A_n \big((1+\delta)(1+\epsilon^*)^2- (1-\delta))   + 4\delta\ln\ln C\\
&\le -\epsilon^*\big((1+\epsilon^*) \kappa_2^2 A_n +\frac{5\delta\ln\ln C}{\epsilon^*}\big)
\\ & \qquad 
+ \frac{\kappa_2^2 }{2}A_n \big((1+\delta)(1+\epsilon^*)^2- (1-\delta))   + 4\delta\ln\ln C\\
&=\frac{\kappa_2^2}{2} A_n \big(- 2\epsilon^*(1+\epsilon^*)+ (1+\epsilon^*)^2 -1  + \delta((1+\epsilon^*)^2  + 1))
-\delta\ln\ln C \\
&=\frac{\kappa_2^2}{2} A_n \big(- (\epsilon^*)^2 + \delta((1+\epsilon^*)^2  + 1)) -\delta\ln\ln C <0 
\end{align*}
for $\delta$ such that 
\begin{equation}
\label{eq:epsilon-delta-nn-positve-2}
- (\epsilon^*)^2 + \delta((1+\epsilon^*)^2  + 1))  < 0 .
\end{equation}
In this case
\begin{equation}
\label{eq:nn-positive-2}
\frac{\cL_n^{\le,\kappa_2}}{\cL_n^{\ge,\kappa_3}} \le (\ln C)^{-\delta} \rightarrow 0   \qquad (C\rightarrow\infty)
\end{equation}
and $\cN_n$ is positive for large $C$.

Hence at round $n$ such that $n\le\tau_2<\tau_1,\tau_3$, 
$\cN_n$ is positive for large $C$ in both cases.

We finally consider the case that
$n=\tau_3$, $n\le\tau_2$ and $n<\tau_1$.
The difficulty with the stopping time $\tau_3$ is that
it depends on Reality's move $x_n$,
thus it is after Skeptic uses the strategy
that Skeptic know whether $n=\tau_3$.
We need to make sure that $\cN_n$ is positive even if Reality has
chosen a very large $|x_n|$ at the round $n$.
By (vi) of  Proposition \ref{pro:h}
\[ 
1+\kappa x_i + \frac{\kappa^2 x_i^2}{2} - \frac{h(x_i)}{h(\kappa^{-1})}
= 1 + y + \frac{y^2}{2} - \frac{h(by)}{h(b)}
\qquad (y=\kappa x_i, b=\kappa^{-1}).
\]
Hence  for all $x_i$ and $\kappa>0$ 
\[
1+\kappa x_i + \frac{\kappa^2 x_i^2}{2} - \frac{h(x_i)}{h(\kappa^{-1})} \le 2
\]
and the relative growth of $\cL^{\le, \kappa_2}_n$ is bounded by $3$ from above.
Hence at $n=\tau_3 \le  \tau_1, \tau_2$
\[\cL^{\le, \kappa_2}_{n} \le 3 \cL^{\le, \kappa_2}_{n-1}.\]
Also for all $x_i$ and $\kappa>0$ 
\[
1+\kappa x_i + (1+\delta)\frac{\kappa^2 x_i^2}{2} > 1+\kappa x_i + \frac{\kappa^2 x_i^2}{2} 
\ge \frac{1}{2}
\]
Hence the relative growth of $\cL^{\ge, \kappa_3}_n$ 
is bounded by $1/3$ from below. Hence
at $n=\tau_3 \le  \tau_1, \tau_2$
\[
\frac{\cL_{n}^{\le,\kappa_2}}{\cL_{n}^{\ge,\kappa_3}}
\le 9 \times \frac{\cL_{n-1}^{\le,\kappa_2}}{\cL_{n-1}^{\ge,\kappa_3}}.
\]
Then $\cN_n$ is positive at $n=\tau_3 \le  \tau_1, \tau_2$
by choosing $C$ large enough in 
\eqref{eq:nn-positive-1} and \eqref{eq:nn-positive-2} such that
\begin{equation}
\label{eq:tau3-positive}
(\ln C)^{c_\epsilon-1}  < 1/3  \quad \text{and}\ \quad
(\ln C)^{-\delta} < 1/9.
\end{equation}
\end{proof}

\subsubsection{Strategy forcing the lower bound}
Here we discuss Skeptic's strategy forcing the lower bound in Proposition \ref{th:lower-bound}.
For each sufficiently small $\epsilon>0$,  
we want to construct a positive capital process $\cK_n$ 
such $\limsup_n \cK_n = \infty$ for any path 
satisfying  the antecedent in \eqref{eq:lower-bound} and
\begin{equation}
\label{eq:lower-bound-violation}
S_n \le  (1-2\epsilon) \sqrt{2A_n \ln\ln A_n}
\end{equation}
for all sufficiently large $A_n$.
We also assume that Skeptic is already employing a strategy forcing the 
upper bound in LIL for $-S_n$ with a small initial capital.  
Hence $S_n \ge -(1+\epsilon) \sqrt{2A_n \ln \ln A_n}$ for all sufficiently large $A_n$. 
For a path satisfying  the antecedent in \eqref{eq:lower-bound} and the inequality in 
\eqref{eq:lower-bound-violation}, at the round $n'$ with $A_{n'}=(D+1)A_n$ we have 
\[
S_{n'} \le  (1-2\epsilon) \sqrt{2(D+1)A_n \ln \ln (D+1)A_n}.
\]
Then 
\[
S_{n'}- S_n  \le  (1-2\epsilon) \sqrt{2(D+1)A_n \ln\ln (D+1)A_n} + (1+\epsilon) \sqrt{2A_n \ln \ln A_n}
\]
Let $D=1/\epsilon^4$. Recall that we assumed $\epsilon<1/8$ for definiteness. 
For this $D=1/\epsilon^4$  it is easily seen that for all sufficiently large $A_n$
we have
\begin{align*}
&(1-2\epsilon) \sqrt{2(D+1)A_n \ln\ln (D+1)A_n} + (1+\epsilon) \sqrt{2A_n \ln \ln A_n} \\
&\qquad\qquad  \le (1-\epsilon) \sqrt{2(D+1)A_n \ln \ln (D+1)A_n}
\end{align*}
and
\[
S_{n'}- S_n  \le  (1-\epsilon) \sqrt{2(D+1)A_n \ln \ln (D+1)A_n}.
\]
Now, if necessary,  we increase $D$ to  $D=\max(C,1/\epsilon^4)$, where $C$ is taken sufficiently large to 
satisfy requirements (\eqref{eq:C-large-1},
\eqref{eq:delta-C-together-1},
\eqref{eq:tau3-positive})
in the previous sections.


Now we consider the following strategy based on the strategy of 
Lemma \ref{lem:N} with $C$ replaced by $D^k$ where $k\in\mathbb{N}$.

\begin{quote}
Start with initial capital $\cK=1$.\\
Set $k=1$.\\
Do the followings repeatedly:\\
\indent $C:=D^k$.\\
\indent Apply the strategy in Lemma \ref{lem:N} until\\
\indent\indent(i) $v_n>\delta^2\frac{C}{\ln\ln C}$,\ 
$w_n>\delta h(\sqrt{\frac{C}{\ln\ln C}})$,\\
\indent\indent\indent\mbox{ or }$\sum_{i=1}^n w_i>\delta h\left(\sqrt{\frac{C}{\ln\ln C}}\right)\ln\ln C$,\\
\indent\indent(ii) $A_n\ge C$, \\
\indent\indent\mbox{ or }\\
\indent\indent(iii) $|x_n|>\delta \sqrt{C/\ln\ln C}$,\\
\indent Set $k=\max\{k+1,\min\{m\ :\ D^m>A_n\}\}$.
\end{quote}

The ``until'' command is understood
exclusively for (i),
but inclusively (ii) and (iii).
If (i) happens, Skeptic does not apply the strategy of Lemma \ref{lem:N}
and let $0=M_n=V_n=W_n$.  He
increases $k$ (and $C$)
so that (i) does not hold (such $k$ always exists)
and Skeptic can apply the strategy for the increased $C$.
If (ii) happens, Skeptic continues to apply the strategy
and go to the next $k$ after that.
Note that, Skeptic can observe whether (i) or (ii) happened or not 
before his move, because (i) and  (ii) only depend on Forecaster's move, but
he  knows whether (iii) happens or not
only after Skeptic applied a strategy,
so ``until'' command should be inclusive for (iii).
This point was already  discussed at the end of our proof of Lemma \ref{lem:N}.

Suppose that the path satisfies the antecedent in \eqref{eq:lower-bound}
and the inequality in \eqref{eq:lower-bound-violation}.
Since $A_n\to\infty$, $k$ will go indefinitely by (ii).

First we claim that
\[v_n=o(b_n^2),\ w_n=o(h(b_n))\mbox{ and }\sum_{i=1}^nw_i=o(h(b_n)).\]
The second formula follows from $\sum_n w_n/h(b_n)<\infty$
and the third formula follows from $\sum_n w_n/h(b_n)<\infty$
and Kronecker's lemma.
We show that 
\[v_n=o(b_n^2).\]
Suppose otherwise.
Then, for some $c$ such that $0<c<1$,
\[\frac{\sqrt{v_n}}{b_n}>c\]
for infinitely many $n$.
Since $h(cx)/h(x)\ge c^3$,
\[\frac{h(\sqrt{v_n})}{h(b_n)}\ge \frac{h(cb_n)}{h(b_n)}\ge c^3\]
for infinitely many $n$, which contradicts the fact that
\[h(\sqrt{v_n})\le w_n=o(h(b_n))\]
by Proposition \ref{prop:vw}.

We claim that (i) and (iii) happen only finitely many times.
Consider the case that $k$ is sufficiently large.
Then $n$ is large, thus,
by the fact showed above,
we have
\begin{align}\label{eq:all-small}
v_n\le\frac{\delta^2}{2} b_n^2,\ w_n\le\frac{\delta}{2}h(b_n),\ 
\sum_{i=1}^n w_i\le\frac{\delta}{2} h(b_n)\mbox{ and }
|x_n|\le\frac{\delta}{2}\sqrt{\frac{A_n}{\ln\ln A_n}}.
\end{align}
If $A_n\ge C$, then $A_{n-1}<C$.
Then, in any case,
\[A_n=A_{n-1}+v_n<C+\frac{\delta^2}{2}\frac{A_n}{\ln\ln A_n}<C+\delta A_n,\]
which implies
\[C>(1-\delta)A_n.\]
Since $A_n$ is sufficiently large too,
\[\frac{b_n}{2}=\frac{1}{2}\sqrt{\frac{A_n}{\ln\ln A_n}}
<\sqrt{\frac{(1-\delta)A_n}{\ln\ln (1-\delta)A_n}},\]
thus, by \eqref{eq:all-small}, we have
\[v_n\le\frac{\delta^2C}{\ln\ln C},\ w_n\le\delta h(\sqrt{\frac{C}{\ln\ln C}}),\ 
\sum_{i=1}^n w_i\le\delta h(\sqrt{\frac{C}{\ln\ln C}})\]
and
\[|x_n|\le\delta\sqrt{\frac{C}{\ln\ln C}}.\]
Hence (i) and (iii) do not happen when $k$ is sufficiently large.

Note that $k$ is set to be $k+1$ at all but finitely many times.
As we showed above, we have
\[D^k=C>(1-\delta)A_n,\]
thus
\[D^{k+1}>(1-\delta)DA_n>A_n.\]


Hence from some $k$ on (ii) always happens and
\[\sum_{i=1}^n x_i\le(1-\epsilon)\sqrt{2C\ln\ln C}\]
will be satisfied.
Then $\limsup_n\cK_n=\infty$ because
\[\prod_k\left(1+\frac{1}{\ln D^k}\right)
=\prod_k\left(1+\frac{1}{k\ln D}\right)
=\infty.\]

This completes the proof of Proposition \ref{th:lower-bound}.

\subsection{Proof of the corollary}\label{subsec:proof-cor}

Finally we give a proof of Corollary \ref{cor:HW-S}.
First we give the definition of compliance.

\begin{definition}[Miyabe and Takemura \cite{Miyabe_convrandseries}]
\label{def:comply}
By a strategy $\cR$,
Reality \emph{complies} with the event $E$ if
\begin{enumerate}
\item irrespective of the moves of Forecaster and Skeptic,
both observing their collateral duties, $E$ happens, and
\item $\sup_n \cK_n<\infty$.
\end{enumerate}
\end{definition}

\begin{theorem}[Miyabe and Takemura \cite{Miyabe_convrandseries}]
In the unbounded forecasting,
if Skeptic can force an event $E$,
then Reality complies with $E$.
\end{theorem}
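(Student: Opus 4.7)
The plan is to exhibit an explicit complying strategy for Reality, namely $x_n := m_n$ for every round $n$, which is legal since in the unbounded forecasting protocol Reality moves last and observes $m_n$ before committing. I will then verify the two items of Definition \ref{def:comply} separately, and the whole argument hinges on the fact that in this protocol Skeptic's collateral duty forces $V_n \ge 0$.

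First I would verify condition (ii). For any Skeptic strategy respecting the collateral duty, the capital increment at round $n$ along Reality's play is
\[
M_n(x_n - m_n) + V_n\bigl((x_n-m_n)^2 - v_n\bigr) = -V_n v_n \le 0,
\]
because $V_n \ge 0$ and $v_n \ge 0$. Hence $\cK_n \le \cK_{n-1}$ for every $n$, giving $\sup_n \cK_n \le \cK_0 = 1 < \infty$.

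Next I would verify condition (i). Let $\cS^*$ be the assumed forcing strategy for $E$, producing a non-negative capital process $\cK^*_n$ such that $E$ failing implies $\cK^*_n \to \infty$. The identical calculation applied to $\cS^*$ shows that $\cK^*_n$ is non-increasing along Reality's play, regardless of Forecaster's moves. In particular $\cK^*_n$ stays bounded above by $1$, and so cannot tend to infinity. The contrapositive of the forcing property then forces $E$ to occur.

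I do not anticipate a genuine obstacle; the proof is essentially a one-line observation. The only point worth flagging is that the argument is delicate to the protocol: it uses $V_n \ge 0$ in an essential way, so it does not transfer to UFQSH (where $V_n \in \mathbb{R}$) nor to predictably unbounded forecasting. In this sense the lemma is a structural feature specific to the original unbounded forecasting protocol, under which the ``Forecaster-matching'' move $x_n = m_n$ is always safe for Reality.
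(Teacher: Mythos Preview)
The paper does not supply its own proof of this theorem; it is quoted from \cite{Miyabe_convrandseries} and used as a black box. Your argument is correct for the unbounded forecasting protocol as stated: the constraint $V_n\ge 0$ built into that protocol makes the play $x_n=m_n$ weakly capital-decreasing against every admissible Skeptic move, which simultaneously yields condition~(ii) for an arbitrary Skeptic and, applied to the hypothetical forcing strategy, yields condition~(i) since the event $E$ depends only on the Forecaster--Reality path.

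One point is worth flagging. Your closing remark that the argument ``does not transfer to UFQSH'' is accurate for your particular proof, but note that the paper, immediately after stating the theorem, asserts that the result \emph{does} extend to UFQSH ``by essentially the same proof''---meaning the proof in \cite{Miyabe_convrandseries}, not yours. So the cited argument must be more robust than the bare $x_n=m_n$ trick (indeed in UFQSH one has $V_n\in\mathbb{R}$, so the increment at $x_n=m_n$ need not be nonpositive). Your proof is a clean shortcut that works in the specific protocol of the stated theorem, but it is evidently not the argument the authors have in mind, since theirs survives the passage to $V_n\in\mathbb{R}$.
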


This theorem also holds for UFQSH
by essentially the same proof.

\begin{proof}[Proof of Corollary \ref{cor:HW-S}]
The implication of (i)$\Rightarrow$(ii) immediately follows from the main result.
The implication of (ii)$\Rightarrow$(iii) follows from the result above.

\bigskip
Let us show (iii)$\Rightarrow$(i).
Consider the case that Skeptic uses the strategy with which he can force
\begin{align}\label{eq:HW-LIL-no'}
\limsup_{n\to\infty}\frac{S_n-mn}{\sqrt{2n\ln\ln n}}=\sqrt{v},
\end{align}
and that Reality uses the strategy with which she can comply with
\eqref{eq:HW-LIL}.
Then both \eqref{eq:HW-LIL} and \eqref{eq:HW-LIL-no'} hold
for the realized path $\{x_n\}$.
This implies (i).
\end{proof}

\section*{Discussion}

We gave a sufficient condition for the law of the iterated logarithm
in game-theoretic probability with quadratic and stronger hedges.
The main difference from the result in Shafer and Vovk \cite{ShaVov01}
is that we could show the lower bound (sharpness)
in a non-predictable protocol.
The assumption of the stronger hedge is strong enough
to imply the result which has a similar form
as Hartman-Wintner's LIL and Strassen's converse.

However the condition \eqref{eq:cond-h} says that
there should be a gap between quadratic hedge
and the stronger hedge.
The authors do not know whether the condition can be weakened
so that the hedge is as close to quadratic one as one wants.
The authors also would like to know other formulations of i.i.d.\
in game-theoretic probability.

\section*{Acknowledgement}

The authors thank Shin-ichiro Takazawa
and the anonymous reviewer for their comments.
The first author was partially supported by GCOE, Kyoto University
and JSPS KAKENHI 23740072,
and the second author by the Aihara Project, the FIRST
program from JSPS, initiated by CSTP.

\bibliographystyle{abbrv}
\bibliography{lil}

\end{document}